\documentclass[10pt,reqno]{amsart} 
\usepackage{amsthm,amsmath,amsfonts,amssymb, cases}
\usepackage{hyperref}
\usepackage[a4paper, left=2.5cm, right=2.5cm, top=3cm, bottom=3cm]{geometry}
\usepackage[dvipsnames]{xcolor}
\usepackage{tikz}
\usetikzlibrary{shapes.geometric}
\usetikzlibrary{arrows.meta}
\usepackage[capitalise]{cleveref}
\usepackage[T1]{fontenc}
\usepackage[utf8]{inputenc}
\hypersetup{linktoc=section, colorlinks=true, linkcolor=Red, citecolor=Green, urlcolor=Blue}

\numberwithin{equation}{section}
\newtheorem{Theorem}{Theorem}[section]
\newtheorem{Lemma}[Theorem]{Lemma}
\newtheorem{Proposition}[Theorem]{Proposition}
\newtheorem{Corollary}[Theorem]{Corollary}

\newtheorem{Remark}[Theorem]{Remark}

\renewcommand{\leq}{\leqslant}

\renewcommand{\geq}{\geqslant}

\DeclareMathOperator{\PWC}{PWC}
\DeclareMathOperator{\supp}{supp}

\DeclareMathOperator{\Span}{span}

\DeclareMathOperator{\Card}{Card}

\DeclareMathOperator{\Tr}{tr}

\DeclareMathOperator{\Mat}{Mat}

\def\C{{\mathbb{C}}}

\def\N{{\mathbb{N}}}

\def\R{{\mathbb{R}}}
\def\S{{\mathbb{S}}}
\def\T{{\mathbb{T}}}

\def\Z{{\mathbb{Z}}}

\usepackage{enumitem}

\setlist[itemize]{
  leftmargin=15pt,   
  rightmargin=15pt,
  itemsep=0pt,       
  parsep=0pt,       
  topsep=0pt,       
  partopsep=0pt
}
\setlist[itemize,1]{label=\textbullet}
\setlist[itemize,2]{label=--}
\setlist[itemize,3]{label=$\triangleright$}
\setlist[itemize,4]{label=$\cdot$}

 \setlist[enumerate]{
  leftmargin=15pt,   rightmargin=15pt, itemsep=0pt, parsep=0pt, topsep=0pt, partopsep=0pt
}
\setlist[enumerate,1]{label=(\roman*), ref=(\roman*)}
\setlist[enumerate,2]{label=(\alph*),  ref=\theenumi(\alph*)}
\setlist[enumerate,3]{label=\arabic*.}

\title[Saturating spaces on compact Riemannian manifolds]{Saturating spaces for the squared-gradient nonlinearity on compact Riemannian manifolds via Nash embeddings
}

\author[R. Buffe]{Rémi Buffe}
\address{Université de Lorraine, CNRS, Inria, IECL, F-54000 Nancy, France.}
\email{remi.buffe@univ-lorraine.fr}

\author[A. Duca]{Alessandro Duca}
\address{Université de Lorraine, CNRS, Inria, IECL, F-54000 Nancy, France.}
\email{alessandro.duca@inria.fr}

\author[H. Parada]{Hugo Parada}
\address{Université de Lorraine, CNRS, Inria, IECL, F-54000 Nancy, France.}
\email{hugo.parada@inria.fr}
\thanks{This project is funded by the Agence Nationale de la Recherche by the QuBiCCS project (ANR-24-CE40-3008)}

\begin{document}
\begin{abstract}
 We study the saturation property on smooth compact Riemannian manifolds without boundary. Starting from a finite-dimensional space of smooth functions, one considers successive iterations of the nonlinear map $f\mapsto|\nabla_g f|_g^2$ and convex combinations. We extend to the general setting of compact manifolds existing results on particular geometries, namely the torus, spheres and Lie groups. Our approach relies on the existence of an isometric embedding of the manifold into $\mathbb{R}^n$ due to Nash \cite{Nash}. As an application, we obtain global approximate controllability in arbitrarily small time for the bilinear Schr\"odinger equation on any connected compact Riemannian manifold, with finitely many potentials.
\end{abstract}
\maketitle
\section{Introduction and main results}
A classical problem in applied mathematics and physics is to drive a dynamical system from one state to another by acting on it through a limited number of external inputs, the controls. For partial differential equations,
the most studied setting is that of additive controls, where the input enters the equation as a source term. Additive controls, however, can only inject or
remove matter, energy or probability, and they are not suited to situations where one acts on the system by modifying its parameters, for instance the potential felt by a quantum particle exposed to an electric field, the
reaction rate in a chemical process, or the coefficients of a diffusion. In such problems, the control should act multiplicatively on the state and in the typical
case where the spatial profile of the action is fixed and only its intensity can be controlled in time, one adds to the equation one or more terms of the form
$$
  u(t)\,\mu(x)\,\psi ,
$$
where $\psi$ is the unknown of the evolution, $u$ is the control, and the fixed function $\mu$ describes the spatial profile of the action. The resulting system is called bilinear since it is linear in $\psi$ and in $u$.

\smallskip

\noindent
Global approximate controllability in arbitrarily small time with bilinear controls has been established only recently. It was obtained for the Schr\"odinger equation in \cite{DN24,BP25}, for the wave equation in \cite{Poz24,BPP26}, and for parabolic equations in \cite{DPU25,DucaTakahashi2025} (see also \cite{BCP26,BCS13,BDP26KdV,P26} for further developments). All these results build on the work \cite{DN24} which introduced a multiplicative version of the saturation method developed by
Agrachev and Sarychev for additive control problems \cite{AS05, AS06}, further significantly extended by \cite{BP25},. At the core of the multiplicative saturation method lies a density property. Heuristically speaking, starting from the vector space spanned by the control potentials $\mu_1,\dots,\mu_q$, one has to determine whether the set obtained by iterating convex combinations and the nonlinear operation 
\begin{equation}\label{11}\mu\mapsto |\nabla_g \mu|_g^2\end{equation} 
is dense in a suitable function space. The existing results have been established only in specific settings, relying on the algebraic structure of the potentials: trigonometric functions on the torus \cite{DN24} and on the sphere \cite{CP23}, Hermite functions on $\R^d$ \cite{BP25} (the case of an interval has been treated in \cite{DucaTakahashi2025}). All of these articles consider very specific functions as potentials, and
exploit their algebraic properties to ensure that iterating the saturation process generates a dense space.  In related work \cite{Par26}, saturating families are found in Lie groups and normal homogeneous spaces (quotient of Lie groups by closed subgroups). We recall that, with respect to control results, the density property of the map \eqref{11} guarantees small-time controllability of phases for general smooth compact Riemannian manifolds \cite[Theorem~6]{CP23} and \cite[Theorem~A.3]{BLS25}.

\smallskip

\noindent 
The aim of this work is to establish the saturation property for the map \eqref{11} on a general smooth compact Riemannian manifold $M$. In this setting, the algebraic structures of trigonometric functions and of Hermite functions, which are exploited in previous works, are no longer available. Our approach relies on the existence of an isometric embedding $\Theta : M \to \mathbb{R}^n$, provided by the Nash embedding theorem. As potentials, we take the functions $\cos(k\cdot\theta)$ and $\sin(k\cdot\theta)$, with $k\in\mathbb{Z}^n$ and $|k|_\infty\leqslant 1$, where $\theta$ is a suitable rescaling of $\Theta$. The key observation is that, since $\Theta$ is isometric, the quantity $\sum_{i=1}^n |\nabla_g \theta_i|_g^2$ is constant on $M$. This identity allows us to carry out a combinatorial argument on $\mathbb{Z}^n$, which shows that all the functions $\cos(k\cdot\theta)$ and $\sin(k\cdot\theta)$, $k\in\mathbb{Z}^n$, are generated. Density then follows from a Stone--Weierstrass-type theorem in $C^\infty(M)$ due to Nachbin \cite{Nachbin1949}. Beyond the application to the Schrödinger equation (see Theorem~\ref{th:bilinear_control} below), we expect our construction to provide geometric tools for extending results on approximate controllability, unbounded growth of Sobolev norms, and topological irreducibility of randomly forced PDEs from very specific geometries to more general compact Riemannian manifolds. The square-gradient structure also suggests applications to control problems for KPZ-type equations. Such extensions require analytical estimates adapted to each case. The present article can be viewed as a bilinear counterpart to the additive saturation results \cite{BDP26}, where the authors obtained a necessary and sufficient condition for iterations of polynomial maps of the form
\[
\mu\mapsto \mu^p,\quad p\in\N,\;p\geq2.
\]
In that setting, general embeddings coming from the Whitney theorem are used. Here, due to the presence of the Riemannian gradient in \eqref{11}, it is natural to additionally require the embedding to be isometric.

\subsection{Definitions of saturation sets and main result}
Let $(M,g)$ be a smooth compact Riemannian manifold of dimension $d\geq 1$ without boundary. For $f,h\in C^\infty(M)$, we define the following nonlinear maps
\[
\mathsf Q(f) := |\nabla_g f|^2_g,\quad \mathsf B(f,h) = g(\nabla_g f,\nabla_g h). 
\]
Let $n\in\N^*$ and let $\theta:M\longrightarrow \R^n$ be a smooth map. Let $\mathsf E$ be the finite dimensional subspace of $C^\infty(M;\R)$ given by 
\[
\mathsf E := \Span_\R\{ \cos( k\cdot \theta ),\sin( k\cdot \theta ), \; k\in\Z^n,\; |k|_\infty\leq 1\}.
\]
We define the following increasing sequence of sets 
\begin{equation}\label{eq:satu_space_def}
\mathsf H_0 = \mathsf E,\quad \mathsf H_{j+1} = \Big\{ h_0+\sum_{k=1}^{k_0}\mathsf Q(h_k),\;k_0\in\N,\;h_0,\dots,h_{k_0}\in\mathsf H_j\Big\},\quad \mathsf H_\infty = \bigcup_{j\geq 0}\mathsf H_j.
\end{equation}
Our main result is as follows.
\begin{Theorem}\label{thm:main}
   Let $(M,g)$ be a smooth compact Riemannian manifold without boundary, and let $n\in\N$ and $\Theta : M \rightarrow \R^n$ be any smooth isometric embedding (it always exist by the Nash embedding Theorem \ref{thm:Nash}). With the normalization $\theta=\|\Theta\|^{-1}_{L^\infty(M)}\Theta$, the set $\mathsf H_\infty$, defined in \eqref{eq:satu_space_def}, is dense in $C^\infty(M;\R).$ 
\end{Theorem}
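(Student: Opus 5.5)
The argument has three stages. First, show that the vector space spanned by $\mathsf H_\infty$ is dense. Second, show that $\mathsf H_\infty$ is a convex cone containing $\mathsf E$ which in fact contains every trigonometric polynomial $\cos(k\cdot\theta),\sin(k\cdot\theta)$, $k\in\Z^n$. Third, conclude with Nachbin's Stone--Weierstrass theorem in $C^\infty(M)$.

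\emph{Structure of $\mathsf H_\infty$.} By definition each $\mathsf H_j$ is closed under addition: given two elements $h_0+\sum\mathsf Q(h_k)$ and $h_0'+\sum\mathsf Q(h_k')$, their sum is $(h_0+h_0')+\sum\mathsf Q(h_k)+\sum\mathsf Q(h'_k)$, and $h_0+h_0'$ lies in $\mathsf H_j$ by induction, with $\mathsf H_0=\mathsf E$ a vector space. Each $\mathsf H_j$ is also closed under multiplication by positive scalars, since $\lambda\mathsf Q(h)=\mathsf Q(\sqrt\lambda h)$. The key lemma will be the following: if $\pm f\in\mathsf H_j$ and $\pm h\in \mathsf H_j$, then $\pm\mathsf B(f,h)\in\mathsf H_{j+1}$. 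This follows from the polarization identity
\[
\pm 2\mathsf B(f,h)=\mathsf Q(f\pm h)-\mathsf Q(f)-\mathsf Q(h),
\]
where the negative terms are compensated by writing $-\mathsf Q(f)$ as an element of the linear part. More precisely, I will show inductively that the set $L_j:=\{f:\pm f\in\mathsf H_j\}$ is a vector space, and that $\mathsf Q(f)\in L_{j+1}$ whenever $f\in L_j$ and $\mathsf Q(f)$ can be expressed as a difference already lying in $L_{j+1}$. In practice the clean statement I will aim for is: if $f,h\in L_j$ and $\mathsf Q(f)$, $\mathsf Q(h)$, $\mathsf Q(f\pm h)$ are computed explicitly as elements of $\Span\{\cos(k\cdot\theta),\sin(k\cdot\theta)\}$ whose coefficients are determined, then linear combinations of these yield new frequencies in $L_{j+1}$.

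\emph{Combinatorics on $\Z^n$.} For $f=\cos(k\cdot\theta)$ and $h=\cos(l\cdot\theta)$ we have
\[
\mathsf B(\cos k\cdot\theta,\cos l\cdot\theta)=\sin(k\cdot\theta)\sin(l\cdot\theta)\sum_{i,m}k_il_m\,g(\nabla\theta_i,\nabla\theta_m).
\]
The Gram factor $G_{im}=g(\nabla\theta_i,\nabla\theta_m)$ is not constant; only its trace is, namely $\sum_iG_{ii}=d/\|\Theta\|_\infty^2$, because $\Theta$ is an isometry. So I will use diagonal combinations. Taking $l=k$ in the coordinate directions and summing over $i$ the contributions $\mathsf Q$ applied to $\cos(k\cdot\theta\pm\theta_i)$-type elements, the cross terms cancel by a sign symmetry: pairing $k+e_i$ with $k-e_i$ kills the $G_{im}$ with $i\ne m$ once one sums over the $2^n$ sign choices $\varepsilon\in\{\pm1\}^n$ with $|\varepsilon|_\infty\le1$. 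The intended identity is
\[
\sum_{\varepsilon\in\{\pm1\}^n}\mathsf Q\bigl(\cos(k\cdot\theta)\cos(\varepsilon\cdot\theta)\bigr)\text{-type sums}\;=\;c\,\cos\bigl((2k)\cdot\theta\bigr)+(\text{lower frequencies}),
\]
where the trace identity makes $c$ a nonzero constant. Lower frequencies already lie in the span, so subtracting them shows that the frequency set $\{k:\cos(k\cdot\theta),\sin(k\cdot\theta)\in L_\infty\}$ is closed under doubling and under adding vectors of $\{-1,0,1\}^n$. By induction on $|k|_\infty$ this gives all of $\Z^n$. Constants belong to $\mathsf E$ ($k=0$), and the sign requirement is handled because $\mathsf E$ is symmetric and the constant term enters with both signs. \textbf{This step is the main obstacle}: getting an exact identity in which the nonconstant off-diagonal Gram entries cancel and only the constant trace survives. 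I would first try the case $n=1,2$ by hand to find the right symmetrized combination, then generalize by summing over the reflections $\theta_i\mapsto-\theta_i$ acting on frequencies.

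\emph{Conclusion.} The set $L_\infty$ is then the algebra $\mathcal A$ of trigonometric polynomials in $\theta$: it is a subalgebra by product-to-sum formulas, contains the constants, and separates points because $\theta$ is injective and $|\theta|\le1<\pi$. It also separates tangent vectors, since $d(\sin\theta_i)=\cos\theta_i\,d\theta_i$ with $\cos\theta_i\neq0$, and $\Theta$ is an immersion. Nachbin's theorem then gives that $\mathcal A$ is dense in $C^\infty(M;\R)$. Since $\mathcal A\subset L_\infty\subset\mathsf H_\infty$, the theorem follows.
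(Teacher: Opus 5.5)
Your outer frame is the paper's: the convex-cone structure, the observation that only the trace $\sum_i\mathsf Q(\theta_i)=c^*$ is constant, and ``all frequencies $\Rightarrow$ Nachbin''. Your Nachbin step is fine; you only need $\mathcal A\subset\mathsf H_\infty$, not equality. The gap is in the middle, and it goes beyond the identity you flag as missing.

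Your ``key lemma'' that $\pm f,\pm h\in\mathsf H_j$ implies $\pm\mathsf B(f,h)\in\mathsf H_{j+1}$ is not justified. Polarization requires $-\mathsf Q(f)$ and $-\mathsf Q(h)$ to lie in $\mathsf H_\infty$. With $f=h$, your lemma would assert that $\pm f\in\mathsf H_\infty$ forces $-\mathsf Q(f)\in\mathsf H_\infty$, which is exactly what has to be proved. This is why the paper works in $\mathcal K_\infty=\{f\in\mathcal H_\infty:\ -\mathsf Q(f)\in\mathsf H_\infty\}$. The only source of negative squares is the trace identity itself:
\[
-\mathsf Q(\theta_j)=-c^*+\sum_{k\neq j}\mathsf Q(\theta_k)\in\mathsf H_\infty,
\]
since $\pm c^*\in\mathsf E$ and $\mathsf Q(\theta_k)=\mathsf Q(\cos\theta_k)+\mathsf Q(\sin\theta_k)$. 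Polarizing with $\theta_j\pm\theta_k$ (using $e_j\pm e_k\in\mathfrak C_1$) gives $\pm\mathsf B(\theta_j,\theta_k)\in\mathsf H_\infty$. Hence $\pm\mathsf Q(\ell\cdot\theta)\in\mathsf H_\infty$, and $-\mathsf Q(\cos(\ell\cdot\theta))=\mathsf Q(\sin(\ell\cdot\theta))-\mathsf Q(\ell\cdot\theta)\in\mathsf H_\infty$, for every frequency $\ell$ already obtained with both signs. This is what makes $\mathsf B$ of two already-obtained exponentials usable with either sign.

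The mechanism you sketch for the combinatorics also cannot work as described. You propose to produce $c\cos(2k\cdot\theta)$ plus ``lower frequencies'' and subtract the latter because they lie in the span. But $\mathsf Q$ and $\mathsf B$ of trigonometric functions of $\theta$ always carry the non-constant factors $\mathsf B(\theta_i,\theta_m)$, so the by-products are not trigonometric polynomials in $\theta$. The workable bookkeeping, with $\mathsf B$ extended $\C$-bilinearly, is
\[
\mathsf B(e^{ip\cdot\theta},e^{iq\cdot\theta})=-\mathsf B(p\cdot\theta,q\cdot\theta)\,e^{i(p+q)\cdot\theta}.
\]
This sits at the single frequency $\ell=p+q$ and is available with both signs when $p,q\in\Lambda_{\mathcal K}$. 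One then combines several decompositions of the same $\ell$ so that the Gram coefficients add up to the trace, for instance
\[
\sum_{j=1}^n\Big(\mathsf B(e^{i(p+e_j)\cdot\theta},e^{i(q-e_j)\cdot\theta})+\mathsf B(e^{i(p-e_j)\cdot\theta},e^{i(q+e_j)\cdot\theta})-2\mathsf B(e^{ip\cdot\theta},e^{iq\cdot\theta})\Big)=2c^*e^{i\ell\cdot\theta}.
\]
This needs all neighbours $p\pm e_j$, $q\pm e_j$ to be already in $\Lambda_{\mathcal K}$. So closure under adding vectors of $\{-1,0,1\}^n$ is not automatic, and if it were, doubling would be superfluous.

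Choosing such ``pivot'' splittings is the real combinatorial content. The paper runs an induction on the cubes $\mathfrak C_L$. It uses a separate lemma producing $(L+1)\mathfrak C_1$ from $\mathfrak C_L$, and ad hoc arguments for $L=2,3$; in the $L=2$ case a sign average over $\varepsilon\in\{-1,1\}^{J-1}$, close to what you anticipate, kills the off-diagonal terms. Only then does the uniform pivot $p=\lfloor\frac{L+1}{2}\rfloor\sum_j e_j$ handle the passage from $\mathfrak C_L$ to $\mathfrak C_{L+1}$ for $L\geq 3$. As it stands, your proposal leaves this core step open.
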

For the sake of completeness, we provide in Section \ref{Sec:Riemannian} a short summary on Riemannian geometry.

\begin{Remark}A few remarks are in order.
\begin{enumerate}
    \item Concerning the dimension $n\in\N$ in Theorem \ref{thm:main}, Nash \cite{Nash} proved that one can take $n= \tfrac{d(3d+11)}{2}$, where $d$ is the dimension of $M$. This bound was later improved by G\"unther \cite{Gunther}, who obtained $n = \max (\tfrac{d(d+5)}{2},\tfrac{d(d+3)}{2}+5)$.
    \item We have $\dim \mathsf E\leq 3^n$, but the dimension depends on the embedding $\Theta$.
        \end{enumerate}
\end{Remark}
In fact, we shall prove the following stronger result, which implies Theorem \ref{thm:main}. 
\begin{Theorem}\label{thm:Hcontainsallfrequencies}
       Let $(M,g)$ be a smooth compact Riemannian manifold without boundary, and let $n\in\N$ and $\Theta : M \rightarrow \R^n$ be any smooth isometric embedding (it always exists by the Nash embedding Theorem \ref{thm:Nash}). With the normalization $\theta=\|\Theta\|^{-1}_{L^\infty(M)}\Theta$,
    \[
    \Span_\R\{ \cos (k\cdot\theta),\sin(k\cdot\theta),\; k\in\Z^n\} \subset \mathsf H_\infty.
    \]
\end{Theorem}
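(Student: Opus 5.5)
The plan is to show, by induction on $N$, that the vector space
\[
\mathsf V_N:=\Span_\R\{\cos(k\cdot\theta),\sin(k\cdot\theta),\ |k|_\infty\leq N\}
\]
is contained in $\mathsf H_\infty$. The base case $N=1$ is $\mathsf E=\mathsf H_0$. Two structural facts drive the induction.

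\textbf{First fact: enlarging vector spaces.} If a vector space $\mathsf V\subset \mathsf H_j$, then $\mathsf V+\mathrm{cone}\{\mathsf Q(f): f\in\mathsf V\}\subset \mathsf H_{j+1}$. Since $\mathsf Q(tf)=t^2\mathsf Q(f)$, arbitrarily large positive multiples are available. To pass from a cone to a vector space, it suffices to show that $-\mathsf Q(f)$ also lies in some $\mathsf H_{j'}$ for the relevant $f$. Once $\pm\mathsf Q(f)$ and $\pm\mathsf Q(h)$ are available, polarization $\mathsf Q(f+h)-\mathsf Q(f-h)=4\mathsf B(f,h)$ yields the bilinear terms with either sign.

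\textbf{Second fact: the isometry identity.} Write $G_{ij}:=\mathsf B(\theta_i,\theta_j)$. Since $\Theta$ is isometric, $G(x)$ is $\|\Theta\|_{L^\infty}^{-2}$ times the orthogonal projection onto $d\Theta(T_xM)$. Hence $\Tr G=\sum_i|\nabla_g\theta_i|_g^2=d\,\|\Theta\|_{L^\infty}^{-2}=:c$ is constant. For a frequency $k$ one has
\[
\mathsf Q(\cos(k\cdot\theta))+\mathsf Q(\sin(k\cdot\theta))=k^{T}Gk .
\]
Taking $k=e_i$ and summing over $i$ gives the constant $c$, and the constant lies in $\mathsf E$ (take $k=0$). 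Therefore
\[
-\mathsf Q(\cos\theta_1)=\sum_{i\neq 1}\bigl(\mathsf Q(\cos\theta_i)+\mathsf Q(\sin\theta_i)\bigr)+\mathsf Q(\sin\theta_1)-c\in\mathsf H_1 .
\]
This is exactly how negative signs will be produced. More generally, an identity expressing a constant, or an element of the current space, as a sum of squared gradients lets each summand be moved to the other side with a minus sign.

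\textbf{Induction step.} Assuming $\mathsf V_N\subset\mathsf H_\infty$, I will compute, for $|m|_\infty\leq N$ and $\epsilon\in\{\pm e_i\}$, the quantities $\mathsf B(\cos(m\cdot\theta),\cos(\epsilon\cdot\theta))$, $\mathsf B(\sin(m\cdot\theta),\sin(\epsilon\cdot\theta))$ and their mixed analogues. For instance,
\[
\mathsf B(\cos (m\cdot\theta),\cos(\epsilon\cdot\theta))+\mathsf B(\sin (m\cdot\theta),\sin(\epsilon\cdot\theta))=\cos((m-\epsilon)\cdot\theta)\,(m^{T}G\epsilon),
\]
and the analogous difference produces $\cos((m+\epsilon)\cdot\theta)\,(m^{T}G\epsilon)$. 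The aim is to sum such expressions over $i=1,\dots,n$ with weights chosen so that only $\Tr G=c$ survives. The natural candidate is the pairing of $\theta_i$ with itself inside $m+e_i$, which requires working with $\mathsf Q(\cos((m+e_i)\cdot\theta))$ combined with lower-order terms. The target is to isolate $c\cos((m+e_i)\cdot\theta)$ and $c\sin((m+e_i)\cdot\theta)$ modulo $\mathsf V_N$ and modulo terms already shown to lie in the space with both signs. Since $c\neq 0$, this gives $\mathsf V_{N+1}\subset\mathsf H_\infty$.

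\textbf{Main obstacle.} The hard step is this combinatorial cancellation on $\Z^n$. The non-constant entries $G_{ij}$ (the projection matrix) multiply the trigonometric monomials, and they must be eliminated so that only the trace remains. I would first try the symmetric sum over $\pm e_i$ of the identity $\mathsf Q(\cos(k\cdot\theta))+\mathsf Q(\sin(k\cdot\theta))=k^TGk$ with $k=m\pm e_i$. The cross terms $\pm 2(Gm)_i$ cancel, leaving $2n\,m^TGm+2c$. I would then combine this with the same identity at level $m$, where $m^TGm=\mathsf Q(\cos(m\cdot\theta))+\mathsf Q(\sin(m\cdot\theta))$ is already available with both signs, so that the $m^TGm$ contributions cancel and only $c$ times the new frequencies remains. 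Keeping track of signs, so that every term moved across really belongs to the vector-space part, is the delicate bookkeeping in this step.
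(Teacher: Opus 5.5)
You have the right framework, and it is the paper's: the trace identity $\sum_i\mathsf Q(\theta_i)=c$ to produce minus signs, polarization to get $\pm\mathsf B$, and an induction over the cubes $|k|_\infty\leq N$. The gap is that the induction step, which is the whole content of the theorem, is not carried out, and the route you sketch for it cannot work. In your notation $\mathsf Q(\cos(k\cdot\theta))+\mathsf Q(\sin(k\cdot\theta))=k^TGk$ has no oscillating factor. So your symmetric sum over $k=m\pm e_i$, after cancelling $m^TGm$, produces only the constant $2c$, which is already in $\mathsf E$, and no new frequency appears. It also uses $\cos((m\pm e_i)\cdot\theta)$, i.e.\ the very frequencies you are trying to reach. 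Pairing a level-$N$ frequency $m$ with a unit frequency $\epsilon=\pm e_i$ is too restrictive as well. For $\ell=(N+1)e_1$ the only admissible split is $m=Ne_1$, $\epsilon=e_1$, which yields only multiples of $G_{11}\cos(\ell\cdot\theta)$ and $G_{11}\sin(\ell\cdot\theta)$, never the other diagonal entries $G_{jj}$, $j\neq1$. So the trace cannot be assembled this way.

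The missing mechanism is a second-difference (``pivot'') identity. For $\ell=p+q$,
\[
\mathsf B\big(e^{i(p+e_j)\cdot\theta},e^{i(q-e_j)\cdot\theta}\big)+\mathsf B\big(e^{i(p-e_j)\cdot\theta},e^{i(q+e_j)\cdot\theta}\big)-2\,\mathsf B\big(e^{ip\cdot\theta},e^{iq\cdot\theta}\big)=2\,\mathsf Q(\theta_j)\,e^{i\ell\cdot\theta}.
\]
This isolates one diagonal entry times the target exponential, and summing over $j$ gives $2c\,e^{i\ell\cdot\theta}$. It requires $p,q,p\pm e_j,q\pm e_j$ to be already reached, and finding such splittings is the real combinatorial difficulty. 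To pass from $\mathfrak C_L$ to $\mathfrak C_{L+1}$ with $L\geq3$, one can take (for $\ell\geq0$) $p=\lfloor\frac{L+1}{2}\rfloor(1,\dots,1)$. Near the axes of small cubes, however, no such split exists: for $\ell=2e_1$ or $\ell=2e_1+e_2$ one would need $p_1=q_1=0$. The paper therefore first reaches $(L+1)\mathfrak C_1$ with an asymmetric combination based on $\mathsf B(e^{ie_1\cdot\theta},e^{iLe_1\cdot\theta})=-L\,\mathsf Q(\theta_1)e^{i(L+1)e_1\cdot\theta}$. It then handles $\mathfrak C_2$ and $\mathfrak C_3$ by separate case analyses.

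Two smaller points. First, the induction hypothesis must be that the span lies in $\{f:\ \pm f,\pm\mathsf Q(f)\in\mathsf H_\infty\}$, which you verify only for the unit frequencies. In general it follows from three facts: $\pm\mathsf B(\theta_i,\theta_j)\in\mathsf H_\infty$, the relation $-\mathsf Q(\cos(k\cdot\theta))=\mathsf Q(\sin(k\cdot\theta))-k^TGk$, and the fact that this set is a vector space. Second, the polarization $\mathsf Q(f+h)-\mathsf Q(f-h)=4\mathsf B(f,h)$ presupposes $-\mathsf Q(f-h)\in\mathsf H_\infty$. You should use $\pm2\mathsf B(f,h)=\mathsf Q(f\pm h)-\mathsf Q(f)-\mathsf Q(h)$ instead.
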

The proofs of Theorem \ref{thm:Hcontainsallfrequencies} and, as a consequence, of Theorem \ref{thm:main} are given in Section \ref{Sec:Proof}.

\subsection{Bilinear control of the Schr\"odinger equation}
We provide in this section one of the possible consequences that Theorem \ref{thm:main} may imply. Let $T>0$ and $q\in\N$. We consider the following Schr\"odinger equation 
\begin{equation}\label{eq:LS}
\begin{cases}
i\partial_t \psi +\Delta_g \psi = (u\cdot \mu)\psi  & \text{in }(0,T)\times M \\
\psi_{|_{t=0}} = \psi_0 \in L^2(M), \\
\end{cases}
\end{equation}
where $-\Delta_g$ denotes the non-negative Laplace-Beltrami operator on $M$, $\mu=(\mu_1,...,\mu_q)\in C^\infty(M;\R^q)$ are fixed potentials, and $u\in L^2(0,T;\R^q)$ is the control. We recall that there exists a unique (mild) solution in $C^0([0,T];L^2(M))$ of \eqref{eq:LS} given by the Duhamel formula
\[
\psi(t) = e^{it\Delta_g} \psi_0 -i \int_0^te^{i(t-s)\Delta_g} (u(s)\cdot \mu)\psi(s) ds,\quad t\in[0,T].  
\]
Note that, as the controls are real valued, the $L^2$--norm is preserved over time
\[
\|\psi(t)\|_{L^2(M)} = \|\psi_0\|_{L^2(M)}.
\]
\begin{Theorem}
\label{th:bilinear_control}
Let $(M,g)$ be a smooth compact connected Riemannian manifold without boundary. There exist $q\in \N$, real valued potentials $\mu\in C^\infty(M;\R^q)$ such that for any $\delta>0$, for any $\psi_0,\psi_1\in L^2(M)$ such that $\|\psi_0\|_{L^2(M)}=\|\psi_1\|_{L^2(M)}$, there exist $T\in(0,\delta]$ and $u \in \PWC([0,T],\R^{q})$ such that the unique solution of \eqref{eq:LS} satisfies
 \[
 \|\psi(T)-\psi_1\|_{L^2(M)}<\delta.
 \]
 Moreover, we have the bound $q\leq 3^n$, where $n$ is the smallest integer such that $M$ embeds isometrically into $\R^n$.
\end{Theorem}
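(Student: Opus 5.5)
Theorem~\ref{th:bilinear_control} follows by combining Theorem~\ref{thm:main} with the multiplicative saturation scheme of \cite{DN24}, in the form adapted to compact manifolds in \cite[Theorem~6]{CP23} and \cite[Theorem~A.3]{BLS25}. Fix an isometric embedding $\Theta:M\to\R^n$ with $n$ minimal. Set $\theta=\|\Theta\|^{-1}_{L^\infty}\Theta$ and take as potentials $\mu_1,\dots,\mu_q$ a basis of $\mathsf E$. Then $q=\dim\mathsf E\leq 3^n$, which gives the stated bound.

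\textbf{Step 1 (phase control).} For any $\phi\in\mathsf H_\infty$, any $\psi_0$, and any small time, the state $e^{i\phi}\psi_0$ is approximately reachable. This is exactly the content of the cited results, given the density property.

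The mechanism is the following. Constant controls of size $1/\tau$ on a time $\tau$ realize $e^{-i\mu}$ for $\mu\in\mathsf E$, up to small errors. The conjugation
\[
e^{-i\phi/\tau^{1/2}}e^{i\tau\Delta_g}e^{i\phi/\tau^{1/2}}\to e^{-i|\nabla_g\phi|_g^2}, \qquad \tau\to0,
\]
strongly in $L^2$, produces $e^{-i\mathsf Q(\phi)}$ from previously reachable phases. Phases combine additively through composition of propagators. An induction on $j$ then yields all $\phi\in\mathsf H_j$: positive multiples and sums are handled by concatenation, and the sign by the convexity structure of \eqref{eq:satu_space_def}.

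\textbf{Step 2 (all phases).} By Theorem~\ref{thm:main}, $\mathsf H_\infty$ is dense in $C^\infty(M;\R)$, and in particular dense in $C^0$. Since $\|e^{i\phi}\psi-e^{i\phi'}\psi\|_{L^2}\to0$ as $\phi'\to\phi$ uniformly, every smooth phase multiplication $e^{i\phi}$ is approximately reachable in arbitrarily small time. The only thing to check is that time contributions stay below $\delta$, which they do because each step uses times of order $\tau$.

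\textbf{Step 3 (from phases to global control).} This is the main obstacle, and it is handled by the argument of \cite{DN24,CP23}.
\begin{enumerate}
\item Approximate $\psi_0$ and $\psi_1$ by smooth states and compose phase multiplications with short free evolutions. This shows that the closure of the set of reachable operators contains $e^{i\phi}$ for every $\phi\in C^\infty(M)$.
\item Use the limits $e^{-i\phi/\tau^{1/2}}e^{i\tau\Delta_g}e^{i\phi/\tau^{1/2}}$ once more, now with vector fields, to generate the transport flows of gradient fields of arbitrary functions.
\item Concatenate these flows. On a connected $M$ this moves modulus densities of the same $L^2$ norm approximately to one another, giving approximate controllability of the modulus $|\psi|$.
\item Adjust the phase with Step 2.
\end{enumerate}
Connectedness of $M$ enters precisely in the modulus-transfer step~(iii), via the density of reachable diffeomorphism-induced pushforwards. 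This is where I would rely directly on \cite[Theorem~A.3]{BLS25} rather than redo the estimates. Concatenating all pieces gives $T\leq\delta$ and the error bound $\|\psi(T)-\psi_1\|<\delta$.
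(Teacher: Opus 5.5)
Your architecture matches the paper's: potentials spanning $\mathsf E$ (so $q=\dim\mathsf E\leq 3^n$), small-time phase control from the saturation property via \cite[Theorem~6]{CP23}, then a transport step to reach arbitrary moduli. The gap is Step~3(iii), which is where connectedness and the general geometry of $M$ actually enter, and you support it with the wrong reference. \cite[Theorem~A.3]{BLS25} is a small-time \emph{phase} controllability result; the paper cites it together with \cite[Theorem~6]{CP23} for exactly that purpose. So it yields nothing beyond your Step~2. Nor do \cite{DN24,CP23} contain a modulus-transfer argument on an arbitrary compact manifold: the paper stresses that small-time global results were previously known only on tori and on Lie groups or normal homogeneous spaces. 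Your claim that concatenated gradient transports connect any two moduli of equal $L^2$ norm on a connected $M$ is therefore asserted without proof. The paper closes this step with \cite[Proposition~1.1]{Par26}, which upgrades small-time phase control to small-time global approximate controllability and rests on the gradient-generation theorem \cite[Theorem~A]{KMP26}.

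If you want a self-contained route instead, you must actually prove (ii) and (iii). For (iii), a Moser-type argument is one option. First reduce by density to states $\sqrt{\rho_k}\,e^{i\alpha_k}$ with $\rho_k>0$ and $\alpha_k$ smooth. Then set $\rho_t=(1-t)\rho_0+t\rho_1$ and solve $\operatorname{div}_g(\rho_t\nabla_g v_t)=\rho_0-\rho_1$. This is possible because $M$ is connected: the kernel of $v\mapsto\operatorname{div}_g(\rho_t\nabla_g v)$ is the constants, and the right-hand side has zero mean. The flow of the time-dependent gradient field $\nabla_g v_t$ pushes $\rho_0\,dv_g$ to $\rho_1\,dv_g$, and it can be approximated by compositions of autonomous gradient flows. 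For (ii), your formula is not the right one. With the scaling $\tau^{-1/2}$ the conjugated free evolution converges to the pure phase $e^{-i\mathsf Q(\phi)}$ and produces no transport. Transport appears at scaling $\tau^{-1}$, where the generator of $e^{-i\phi/\tau}e^{i\tau\Delta_g}e^{i\phi/\tau}$ is $i\tau\Delta_g-2g(\nabla_g\phi,\nabla_g\,\cdot\,)-\Delta_g\phi-i\tau^{-1}\mathsf Q(\phi)$. The divergent phase must then be compensated using the arbitrary phases from Step~2, as in \cite{BP25}. Carrying this out on a general $M$ is also part of the step the paper delegates to \cite{Par26}.
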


The proof of Theorem \ref{th:bilinear_control} is given at the end of Section \ref{Sec:Proof}.

\begin{Remark}
 Of course, as this theorem is very general, the bound on $q$ may not be optimal in particular cases. For instance \cite{DN24,BP25}, when $M=\T^d$, then $q=2d$ suffices. When $M=\S^2$ \cite{CP23}, then $q=3$ is sufficient. In the case where $M$ is a Lie group with a bi-invariant metric \cite{Par26} the number of controls is related to the number of matrix coefficients of a finite family of irreducible representations. As the value of the dimension $n\in\N$ is related to the number of controls we need in the setting of Theorem \ref{th:bilinear_control}, it is thus relevant to consider the smallest possible value of $n$.
 \end{Remark}
Theorem \ref{th:bilinear_control} is the counterpart of \cite[Theorem 3]{BP25}, only stated on the $d-$dimensional torus (as far as compact manifolds are concerned), and the related work \cite[Theorem 1.1 and Theorem 1.2]{Par26} in Lie groups and normal homogeneous spaces. Up to the author's knowledge, this is the first result on general compact Riemannian manifolds.

\begin{Remark}
As another application of Theorem \ref{th:bilinear_control}, it should be possible to generalize the growth of Sobolev norms result of \cite[Theorem 5.1]{DN24} to compact Riemannian manifolds. More precisely, let $s>0$ and $H^s(M):=D((I-\Delta_g)^{s/2})$. If $u$ is replaced by a random process $\eta$ which is piecewise given by independent and identically distributed random variables in $L^2(0,1;\R^q)$ with non-degenerate law, then for any non-zero $\psi_0\in H^s(M)$ the trajectory of \eqref{eq:LS} is expected to be almost surely unbounded in $H^s(M)$.
\end{Remark}

\section{Preliminaries on Riemannian manifolds}\label{Sec:Riemannian}
\subsection{Tangent spaces, local charts and differentials}
Let $(M,g)$ be a smooth compact Riemannian manifold of dimension $d\geq 1$ without boundary. We first recall some well-known facts on Riemannian geometry. Let $m\in M$, and consider $\mathcal C =(\varphi ,\mathcal O)$ a local chart around $m$, that is, $\mathcal O$ is an open neighborhood of $m$ in $M$ and $$\varphi : \mathcal O \longrightarrow \varphi(\mathcal O) \subset \R^d$$ is a smooth diffeomorphism.
We define $T_m M$ as the space of equivalence classes of paths $\gamma :(-\varepsilon,\varepsilon)\longrightarrow M$ with the equivalence relation $\gamma \sim \widetilde\gamma$ if and only if
$\gamma(0)=\widetilde \gamma(0) =m$ and 
\[
\frac{d}{dt} \varphi(\gamma(t))_{|_{t=0}} = \frac{d}{dt} \varphi(\widetilde\gamma(t))_{|_{t=0}}.
\]
We recall that such an equivalence class is independent of the chosen chart. We then write $v=[\gamma]\in T_mM$ and its action on $f\in C^\infty(M)$ is defined as the directional derivative
$$
v f := \frac{d}{dt} f(\gamma(t))_{|_{t=0}},
$$
which, by the chain rule, does not depend on the representative $\gamma$ and uniquely determines $v$. We can then identify $v$ with this operator.
We then define $\partial_i \in T_mM$ by its action on functions $f\in C^\infty(M)$ by
\[
\partial_i f(m) = \partial_{x_i}( f\circ \varphi^{-1})(\varphi(m)).
\]
As $(\partial_1,\dots,\partial_d)$ forms a basis of $T_mM$, we can write for any $v\in T_mM$, 
\[
v = \sum_{i=1}^d v_i \partial_i.
\]
Let $N$ be a smooth manifold of dimension $q\geq 1$. Let $\phi : M \longrightarrow N$ be a smooth function. We consider $T_m\phi$ as the linear map
\[
 \begin{array}{rcl} 
T_m \phi :T_m M & \longrightarrow &T_{\phi(m)}N \\
v=[\gamma] & \longmapsto & [\phi\circ\gamma].
\end{array}
\]
\begin{itemize}
\item In the particular case where $\phi=\varphi$, we have $T_m\varphi(\partial_i)=e_i$, and thus the isomorphism
\[
 \begin{array}{rcl} 
T_m \varphi :T_m M & \longrightarrow &\R^d \\
v=\sum_{i=1}^d v_i\partial_i & \longmapsto & (v_1 ,\dots , v_d)^T.
\end{array}
\]
\item Assume in addition that $N=\R^q$. Then we have the existence of a linear map
\[
 \begin{array}{rcl} 
\mathcal J:\R^d & \longrightarrow &\R^q \\
 (v_1,\dots,v_d)^T & \longmapsto &  T_m\phi \circ T_m\varphi^{-1} (v_1 ,\dots,v_d)^T.
\end{array}
\]
We denote by $J\in \Mat_{q,d}(\R)$ its matrix in the canonical bases of $\R^d$ and $\R^q$, that is, $J=(\partial_j\phi_i(m))_{1\leq i \leq q,1\leq j\leq d}$.
\end{itemize}

\subsection{Riemannian metrics and isometric embeddings}
Here we assume that $(M,g)$ and $(N,h)$ are two Riemannian manifolds of dimension $d\geq 1$ and $q\geq 1$ respectively. Let $\phi : M \longrightarrow N$ be a smooth map. We then define $\phi^*h$ as the pullback of the Riemannian metric $h$ by
\[
\phi^*h_m(u,v) = h_{\phi(m)}(T_m\phi u , T_m \phi v),\quad u,v \in T_mM.
\]
In the case where $\phi = \varphi^{-1}$, and for any $u,v\in T_mM$, we have
    \[
    ((\varphi^{-1})^* g)_{\varphi(m)}(T_m\varphi u,T_m\varphi v) = g_m(u,v).
    \]
   Moreover, we have the existence of a matrix $G=G_{i,j}\in C^\infty(\varphi(\mathcal O);\Mat_{d,d}(\R))$ such that 
       \[
    ( G T_m\varphi u,T_m\varphi v)_{\R^d} = g_m(u,v).
    \]
    The matrix $G$ is a representative of the metric $g$ in the local chart. Moreover, for a smooth function $f\in C^\infty(M;\R)$, we have, in the local chart, $T_m\varphi(\nabla_g f) = G^{-1}\nabla_x (f\circ\varphi^{-1})$. We recall now the well-known Nash isometric embedding theorem \cite{Nash}.
\begin{Theorem}[Nash, 1956]\label{thm:Nash}
Let $(M,g)$ be a smooth Riemannian compact manifold. There exist $n\in \N$ and a smooth embedding $\Theta:M\longrightarrow \R^n$ such that $g=\Theta^*\delta,$ where $\delta$ denotes the Euclidean metric on $\R^n.$
\end{Theorem}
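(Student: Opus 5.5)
The statement is Nash's theorem, which the paper quotes from \cite{Nash}. Were I to prove it, I would use the now-standard route: a perturbation argument around a \emph{free} embedding, closed by a Nash--Moser iteration (or by Günther's fixed-point reformulation \cite{Gunther}).

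\textbf{Step 1: a short initial embedding.} Start from a Whitney embedding $\Theta_0:M\to\R^{N_0}$. Rescaling $\Theta_0$ by a small constant gives $g-\Theta_0^*\delta>0$ as symmetric $2$-tensors. Composing with a polynomial map, e.g. adding all quadratic monomials of the coordinates, makes $\Theta_0$ free. That is, at every point the vectors $\partial_i\Theta_0$ and $\partial_i\partial_j\Theta_0$, $1\leq i\leq j\leq d$, are linearly independent in a local chart. These maps are rescaled by the same small constant, so the inequality survives.

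\textbf{Step 2: decomposition of the defect.} Write $g-\Theta_0^*\delta=\sum_k a_k^2\,d\psi_k\otimes d\psi_k$. Here $a_k$ are cutoffs subordinate to a finite atlas and $\psi_k$ are linear functions in charts. This uses a partition of unity and the fact that positive definite matrices are positive combinations of finitely many rank-one squares $\ell\otimes\ell$, with $\ell$ in a fixed finite set, locally uniformly. Each primitive term $a_k^2\,d\psi_k^2$ can be absorbed approximately by adding two new coordinates $\tfrac{a_k}{\lambda}\bigl(\cos(\lambda\psi_k),\sin(\lambda\psi_k)\bigr)$ with $\lambda$ large. The error this leaves is $O(\lambda^{-1})$ in $C^0$ but not small in higher norms.

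To avoid losing derivatives, I would instead first reduce to the case of a \emph{small} smooth defect. For this, use Nash's trick: embed $(M,\varepsilon$-perturbed metric$)$ into a high-dimensional torus or sphere, where the primitive metrics are realized exactly, and then compose with a fixed isometric embedding of the torus.

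\textbf{Step 3: solving the small-defect problem.} For a free embedding $\Theta$ and a small symmetric tensor $f$, we seek $v$ with $(\Theta+v)^*\delta=\Theta^*\delta+f$. The linearization $\partial_i\Theta\cdot\partial_j v+\partial_j\Theta\cdot\partial_i v=f_{ij}$ is solved algebraically by imposing
\[
\partial_i\Theta\cdot v=0, \qquad \partial_i\partial_j\Theta\cdot v=-\tfrac12 f_{ij}.
\]
Freeness makes this underdetermined system solvable by a smooth right inverse with no derivative loss in $f$, though the inverse depends on second derivatives of $\Theta$. The full nonlinear equation then reads $v=L(\Theta)\bigl(f-Q(v)\bigr)$, where $Q(v)_{ij}=\partial_iv\cdot\partial_jv$.

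\textbf{Main obstacle.} Solving this fixed-point equation is the hard step, since $Q$ involves first derivatives of $v$ while the inverse provides none. The remedy is to follow Günther: rewrite the system using $\partial_iv\cdot\partial_jv=\tfrac12\Delta(\dots)$-type identities, and apply $(\Delta-1)^{-1}$, which gains two derivatives. This turns the equation into a contraction in a Hölder space $C^{2,\alpha}$. Smoothness of $v$ then follows by elliptic regularity, and the embedding property is preserved because $v$ is $C^1$-small.
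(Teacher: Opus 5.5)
The paper gives no proof of this statement. It is quoted from \cite{Nash}, whose argument is a Nash--Moser-type iteration with smoothing operators. Your architecture (a free initial map, Nash's algebraic right inverse of the linearization, G\"unther's use of $(\Delta-1)^{-1}$) is the standard alternative, and Steps 1 and 3 are sound in outline. However, Step 2, which has to connect them, has a genuine gap. Your assessment of the spirals is wrong. Since the two new coordinates are orthogonal to all the others, with $\phi_1=\tfrac{a}{\lambda}\cos(\lambda\psi)$ and $\phi_2=\tfrac{a}{\lambda}\sin(\lambda\psi)$ one has exactly
\[
d\phi_1\otimes d\phi_1+d\phi_2\otimes d\phi_2=a^2\,d\psi\otimes d\psi+\lambda^{-2}\,da\otimes da .
\]
So the leftover error $\lambda^{-2}\sum_k da_k\otimes da_k$ is $O(\lambda^{-2})$ in every $C^k$ norm. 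Having discarded this, you fall back on a torus argument that fails as stated. A map into a flat torus pulls back a sum $\sum_k\omega_k\otimes\omega_k$ of closed $1$-forms, and exact realization of a rank-one term $a^2\,d\psi\otimes d\psi$ in this form forces $a$ to be locally a function of $\psi$, which is false for your cutoffs $a_k$ when $d\geq 2$. Moreover, if the primitive metrics could be realized exactly, Step 3 would be unnecessary.

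What is missing is the closing argument, and the quantifiers matter there. The smallness threshold of Step 3 depends on the free map being perturbed, through its higher norms and the lower bound in the freeness condition. The spiral components have second derivatives of size $\lambda$, so nothing guarantees that Step 3 applies to the map with the spirals appended. The fix is to perturb only the fixed map $\Theta_0$:
\begin{itemize}
\item write $g-\Theta_0^*\delta=\sum_k a_k^2\,d\psi_k\otimes d\psi_k$;
\item apply Step 3 to $\Theta_0$ with $f=-\lambda^{-2}\sum_k da_k\otimes da_k$, which lies below the $\lambda$-independent threshold once $\lambda$ is large;
\item set $\Theta=\bigl(\Theta_0+v,\tfrac{a_k}{\lambda}\cos(\lambda\psi_k),\tfrac{a_k}{\lambda}\sin(\lambda\psi_k)\bigr)_k$.
\end{itemize}
Then $\Theta^*\delta=\Theta_0^*\delta+f+\sum_k\bigl(a_k^2\,d\psi_k\otimes d\psi_k+\lambda^{-2}da_k\otimes da_k\bigr)=g$, and $\Theta$ is an embedding because its first block $\Theta_0+v$ is one. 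Step 3 should also be made explicit. In flat coordinates, replace $\partial_i\Theta\cdot v=0$ by $(\Delta-1)(\partial_i\Theta\cdot v)=-\Delta v\cdot\partial_i v$. Then add to $-\tfrac12 f_{ij}$ the image under $(\Delta-1)^{-1}$ of terms quadratic in $\partial v,\partial^2 v$, chosen so that applying $\Delta-1$ to the metric equation cancels all third-order terms; this is what makes $v\mapsto L(\Theta)(\cdots)$ a contraction on a small ball of $C^{2,\alpha}$. On a curved $M$ the commutators $[\Delta,\nabla]$ add terms that must be tracked. This is where passing to a torus containing $M$, with $g$ extended to it, genuinely helps, because it gives global coordinates and a Laplacian commuting with $\partial_i$.
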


In what follows, $\Theta$ shall always denote an isometric embedding. We thus define $J\in \Mat_{n,d}(\R)$ as the representative of the linear map $\mathcal J := T_m\Theta\circ T_m\varphi^{-1}$ in the canonical basis. From the chain rule, we have the identity
\begin{equation}\label{eq:gradient_of_embeddings}
     T_m\varphi(\nabla_g\Theta_i) = G^{-1} \nabla_x(\Theta_i\circ \varphi^{-1}) = G^{-1} J^{T}e_i,\quad i\in \{1,\dots,n\}.
\end{equation}
We now collect some useful consequences.

\begin{Corollary}\label{cor:G=JTJ}
With the above notations, for any $m\in M$,
\[
G = J^T J.
\]
\end{Corollary}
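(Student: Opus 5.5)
The plan is to compute both sides of the identity on the tangent space at $m$ and compare the resulting bilinear forms. Fix $m\in\mathcal O$ and $u,v\in T_mM$, and write $\tilde u=T_m\varphi\, u$ and $\tilde v=T_m\varphi\, v$ for their coordinate vectors in $\R^d$. The matrix $G$ is characterized by
\[
(G\tilde u,\tilde v)_{\R^d}=g_m(u,v)\quad\text{for all } u,v\in T_mM .
\]
Since $T_m\varphi$ is an isomorphism onto $\R^d$, this determines $G$ uniquely once it is taken symmetric, which holds because $g_m$ is symmetric. It therefore suffices to show that $(J^TJ\tilde u,\tilde v)_{\R^d}=g_m(u,v)$ for all $u,v$.

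I would use the isometry hypothesis $g=\Theta^*\delta$, which by the definition of the pullback reads
\[
g_m(u,v)=\big(T_m\Theta\, u,\,T_m\Theta\, v\big)_{\R^n}.
\]
By the definition of $\mathcal J=T_m\Theta\circ T_m\varphi^{-1}$ and of its matrix $J$, we have $T_m\Theta\, u=\mathcal J\tilde u=J\tilde u$, and similarly $T_m\Theta\, v=J\tilde v$. Hence
\[
g_m(u,v)=(J\tilde u,J\tilde v)_{\R^n}=(J^TJ\tilde u,\tilde v)_{\R^d}.
\]
Since $J^TJ$ is symmetric and $\tilde u,\tilde v$ range over all of $\R^d$, we conclude that $G=J^TJ$ at $\varphi(m)$.

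There is no real obstacle here. The only points needing care are bookkeeping: $G$ is evaluated at $\varphi(m)$, and $J=(\partial_j\Theta_i(m))$ is indeed the matrix of $\mathcal J$, which follows from $T_m\varphi(\partial_j)=e_j$ together with $T_m\Theta(\partial_j)=\sum_i\partial_j\Theta_i(m)\,e_i$.
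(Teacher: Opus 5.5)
Your proof is correct and is essentially the paper's argument: the paper applies the same identity $g=\Theta^*\delta$ to the coordinate vectors $\partial_i,\partial_j$ (your $\tilde u=e_i$, $\tilde v=e_j$) and obtains $(Ge_i,e_j)_{\R^d}=(J^TJe_i,e_j)_{\R^d}$ entry by entry. The symmetry remark is unnecessary, since knowing $(G\tilde u,\tilde v)$ for all $\tilde u,\tilde v\in\R^d$ already determines every entry of $G$.
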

\begin{proof} We compute, by the relation $\Theta^* \delta=g$,
\begin{align*}
(Ge_i,e_j)_{\R^d} &= g_m( \partial_i,\partial_j) = (T_m \Theta \partial_i,T_m \Theta\partial_j)_{\R^n} \\
&=(T_m \Theta \circ T_m\varphi^{-1}e_i ,T_m \Theta\circ T_m\varphi^{-1} e_j)_{\R^n} =  ( J e_i,J e_j)_{\R^n} =( J^T J e_i,e_j)_{\R^d}. 
\end{align*}

\end{proof}

In what follows, for $\Theta$ an isometric embedding, we use the following normalized version
\begin{equation}\label{eq:normalized_embedding}
\theta = \|\Theta\|_{L^\infty(M)}^{-1} \Theta .
\end{equation}
\begin{Corollary}\label{cor:trace-constant}
The function $\sum_{i=1}^n |\nabla_g \theta_i|^2_g$ is constant on $M$. More precisely,
    \[
    \sum_{i=1}^n |\nabla_g \theta_i  |^2_g =d\; \|\Theta\|_{L^\infty(M)}^{-2}=: c^*.
    \]
\end{Corollary}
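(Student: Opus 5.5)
The plan is to compute $\sum_i |\nabla_g\Theta_i|_g^2$ pointwise in a local chart and show that it equals the trace of a projection of rank $d$. Rescaling by $\|\Theta\|_{L^\infty(M)}^{-2}$ then gives the claim, since $\theta_i=\|\Theta\|_{L^\infty(M)}^{-1}\Theta_i$ and $\mathsf Q$ is quadratic.

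Fix $m\in M$ and a chart $(\varphi,\mathcal O)$ around it. By \eqref{eq:gradient_of_embeddings}, $T_m\varphi(\nabla_g\Theta_i)=G^{-1}J^Te_i$. In the chart, $|v|_g^2=(G\,T_m\varphi v,T_m\varphi v)_{\R^d}$, so
\[
|\nabla_g\Theta_i|_g^2=(G G^{-1}J^Te_i,G^{-1}J^Te_i)_{\R^d}=(J G^{-1}J^T e_i,e_i)_{\R^n}.
\]
Summing over $i$ gives
\[
\sum_{i=1}^n|\nabla_g\Theta_i|_g^2=\Tr(JG^{-1}J^T)=\Tr(G^{-1}J^TJ).
\]
By Corollary \ref{cor:G=JTJ}, $J^TJ=G$, so the last trace is $\Tr(I_d)=d$. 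Note that $G$ is invertible because it represents a metric; equivalently, $J$ is injective since $\Theta$ is an immersion. The value $d$ is independent of $m$ and of the chart.

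Dividing by $\|\Theta\|_{L^\infty(M)}^2$ yields $\sum_i|\nabla_g\theta_i|_g^2=d\,\|\Theta\|_{L^\infty(M)}^{-2}=c^*$. Here $\|\Theta\|_{L^\infty(M)}>0$, since $\Theta$ is an embedding of a manifold of dimension $d\geq1$ and hence is not identically zero.

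There is no real obstacle. The only care needed is the bookkeeping of the chart expression of the $g$-norm, namely that $G$ appears once in the norm and $G^{-1}$ twice in the gradient, so that the two combine into $JG^{-1}J^T$.
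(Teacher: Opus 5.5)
Your proof is correct and follows the paper's argument essentially verbatim: you write $\sum_i|\nabla_g\Theta_i|_g^2$ in a chart as $\Tr(JG^{-1}J^T)$, then use $G=J^TJ$ from Corollary~\ref{cor:G=JTJ} and cyclicity of the trace to get $d$, and finally rescale. Your extra remarks are correct details that the paper leaves implicit, namely the invertibility of $G$, the positivity of $\|\Theta\|_{L^\infty(M)}$, and the reading of $JG^{-1}J^T$ as a rank-$d$ projection.
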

\begin{proof}
Recall that, in the local chart, we have \eqref{eq:gradient_of_embeddings}, and using Corollary \ref{cor:G=JTJ},
\[
 \sum_{i=1}^n |\nabla_g \Theta_i  |^2_g = \Tr (J G^{-1} J^T) = \Tr (J (J^TJ)^{-1} J^T) = \Tr ((J^TJ)^{-1}  J^T J)=d.
 \]
 
 The result follows by normalization.
\end{proof}
\section{The non-linear kick and saturation property}\label{Sec:Proof}
Here, $\theta$ is the normalized version of an isometric embedding defined in \eqref{eq:normalized_embedding}. We also recall that the saturation sets are defined in \eqref{eq:satu_space_def}.

\subsection{Properties of saturation sets}
In this section, we provide some basic properties on the saturation sets, in particular convex cone and vector space structures.
\begin{Lemma}\label{Lem:convex_cone}
The set $\mathsf H_\infty$ is a convex cone, stable under $\mathsf Q$.
\end{Lemma}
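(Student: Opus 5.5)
The proof only uses the recursive definition \eqref{eq:satu_space_def} and the homogeneity of $\mathsf Q$. We establish three facts in order: (a) $\mathsf H_j\subset \mathsf H_{j+1}$ and each $\mathsf H_j$ is stable under addition; (b) each $\mathsf H_j$ is stable under multiplication by nonnegative scalars; (c) the cone and $\mathsf Q$-stability properties of $\mathsf H_\infty$ follow.

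For (a), take $k_0=0$ in the definition: every $h_0\in\mathsf H_j$ lies in $\mathsf H_{j+1}$, so the sequence increases. We then prove by induction on $j$ that $\mathsf H_j$ is closed under sums.
\begin{itemize}
\item For $j=0$ this holds because $\mathsf E$ is a vector space.
\item Suppose $\mathsf H_j$ is closed under sums. Given two elements
\[
h_0+\sum_{k=1}^{k_0}\mathsf Q(h_k),\qquad h_0'+\sum_{k=1}^{k_0'}\mathsf Q(h_k')
\]
of $\mathsf H_{j+1}$, their sum equals $(h_0+h_0')+\sum_{k=1}^{k_0}\mathsf Q(h_k)+\sum_{k=1}^{k_0'}\mathsf Q(h_k')$.
\item Here $h_0+h_0'\in\mathsf H_j$ by the induction hypothesis, so the sum has the required form and lies in $\mathsf H_{j+1}$.
\end{itemize}

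For (b), we again argue by induction on $j$. The space $\mathsf E$ is closed under all real scalings. For $\lambda\geq0$ we use the identity $\lambda\,\mathsf Q(h)=\mathsf Q(\sqrt\lambda\, h)$, which holds because $\nabla_g$ is linear and $|\cdot|_g^2$ is quadratic. This gives
\[
\lambda\Big(h_0+\sum_{k=1}^{k_0}\mathsf Q(h_k)\Big)=\lambda h_0+\sum_{k=1}^{k_0}\mathsf Q\big(\sqrt\lambda\,h_k\big).
\]
By the induction hypothesis, $\lambda h_0$ and $\sqrt\lambda\,h_k$ all lie in $\mathsf H_j$. Hence the right-hand side lies in $\mathsf H_{j+1}$.

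For (c), take $f,h\in\mathsf H_\infty$ and $\alpha,\beta\geq0$. Since the sequence increases, both $f$ and $h$ lie in a common $\mathsf H_j$. By (a) and (b), $\alpha f+\beta h\in\mathsf H_j\subset\mathsf H_\infty$, so $\mathsf H_\infty$ is a convex cone. For $\mathsf Q$-stability, if $h\in\mathsf H_j$ then $\mathsf Q(h)=0+\mathsf Q(h)$ with $0\in\mathsf E\subset\mathsf H_j$. Thus $\mathsf Q(h)\in\mathsf H_{j+1}\subset\mathsf H_\infty$.

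There is no real obstacle in this argument. The only point needing care is the positive homogeneity of degree two of $\mathsf Q$, which lets nonnegative scalars pass inside $\mathsf Q$ in step (b).
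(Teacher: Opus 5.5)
Your proof is correct and follows essentially the same route as the paper: induction on $j$, with the base case given by the vector space $\mathsf E$ and the inductive step using $\lambda\,\mathsf Q(h)=\mathsf Q(\sqrt\lambda\,h)$. The paper treats $f+\lambda h$ in a single step rather than separating addition and scaling, and it leaves implicit the monotonicity $\mathsf H_j\subset\mathsf H_{j+1}$ that you use to place $f$ and $h$ in a common $\mathsf H_j$.
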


\begin{proof}
    We prove by induction that $\mathsf H_k$ is a convex cone for any $k\geq 0$. Initialization is immediate as $\mathsf H_0$ is a vector space. Assume that $\mathsf H_k$ is a convex cone. Let $f,h\in \mathsf H_{k+1}$ and $\lambda\geq0$. There exist $\tilde f_j,\tilde h_j \in \mathsf H_k$ such that 
    \[
    f = \tilde f_0+\sum_j \mathsf Q(\tilde f_j),\quad  h = \tilde h_0+\sum_j \mathsf Q(\tilde h_j).
    \]
    Hence, 
    \[
    f+\lambda h = \tilde f_0+ \lambda\tilde h_0 + \sum_j (\mathsf Q(\tilde f_j)+\mathsf Q(\sqrt\lambda\tilde h_j)),
    \]
    thus by the induction hypothesis $f+\lambda h\in \mathsf H_{k+1}.$ Stability under $\mathsf Q$ is immediate from the definition of $\mathsf H_\infty$.
\end{proof}
 We denote by \[\mathcal H_\infty := \mathsf H_\infty\cap (-\mathsf H_\infty),\] that is, the largest vector space contained in $\mathsf H_\infty.$ We are also led to consider 
\[
\mathcal K_\infty:=\{ f\in \mathcal H_\infty,\; -\mathsf Q(f) \in \mathsf H_\infty\}.
\]
 Exploiting the polarization formula, we have the immediate following result.
\begin{Proposition}\label{prop:stability_scalar_product}
    For any $f,h\in C^\infty(M,\R)$ such that $ \mathsf Q(f\pm h),-\mathsf Q(f), -\mathsf Q(h)\in\mathsf H_\infty$, we have $\pm\mathsf B(f,h) \in \mathsf H_\infty$. In addition, 
    $\mathcal K_\infty$ is a real vector space.
\end{Proposition}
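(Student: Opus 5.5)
The plan is to use the polarization identity for the bilinear form $\mathsf B$ together with the convex cone structure of $\mathsf H_\infty$ from Lemma~\ref{Lem:convex_cone}. Since $\mathsf Q(f)=\mathsf B(f,f)$ and $\mathsf B$ is symmetric and bilinear, we have
\[
\mathsf Q(f+h) = \mathsf Q(f)+\mathsf Q(h)+2\mathsf B(f,h),\qquad \mathsf Q(f-h) = \mathsf Q(f)+\mathsf Q(h)-2\mathsf B(f,h).
\]
Hence
\[
\pm\mathsf B(f,h) = \tfrac12\bigl(\mathsf Q(f\pm h) + (-\mathsf Q(f)) + (-\mathsf Q(h))\bigr).
\]
Each of the three terms on the right lies in $\mathsf H_\infty$ by hypothesis. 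Since $\mathsf H_\infty$ is a convex cone, it is closed under sums and multiplication by nonnegative scalars. Therefore $\pm\mathsf B(f,h)\in\mathsf H_\infty$, which is the first claim.

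For the vector space statement, note first that $0\in\mathcal K_\infty$, since $0\in\mathsf E\subset\mathsf H_\infty$ and $\mathsf Q(0)=0$. Next, take $f\in\mathcal K_\infty$ and $\lambda\in\R$. Then $\lambda f\in\mathcal H_\infty$, because $\mathcal H_\infty=\mathsf H_\infty\cap(-\mathsf H_\infty)$ is a vector space: it is a convex cone that is symmetric under $f\mapsto -f$. Moreover $-\mathsf Q(\lambda f)=\lambda^2(-\mathsf Q(f))\in\mathsf H_\infty$ by the cone property. So $\mathcal K_\infty$ is closed under scalar multiplication.

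For sums, let $f,h\in\mathcal K_\infty$. Then $f+h\in\mathcal H_\infty$. It remains to show $-\mathsf Q(f+h)\in\mathsf H_\infty$, and we expand
\[
-\mathsf Q(f+h) = -\mathsf Q(f)-\mathsf Q(h) - 2\mathsf B(f,h).
\]
The first two terms lie in $\mathsf H_\infty$ by assumption, so the key point is to obtain $-\mathsf B(f,h)\in\mathsf H_\infty$ from the first part. The hypotheses of the first part hold here:
\begin{itemize}
\item $-\mathsf Q(f)$ and $-\mathsf Q(h)$ lie in $\mathsf H_\infty$ because $f,h\in\mathcal K_\infty$;
\item $\mathsf Q(f\pm h)\in\mathsf H_\infty$ because $f\pm h\in\mathcal H_\infty\subset\mathsf H_\infty$ and $\mathsf H_\infty$ is stable under $\mathsf Q$ by Lemma~\ref{Lem:convex_cone}.
\end{itemize}
Thus $-\mathsf B(f,h)\in\mathsf H_\infty$, and the cone property gives $-\mathsf Q(f+h)\in\mathsf H_\infty$. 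This completes the proof.

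The only potentially delicate point is this verification that $\mathsf Q(f\pm h)\in\mathsf H_\infty$, which rests on the stability of $\mathsf H_\infty$ under $\mathsf Q$; otherwise the argument is routine.
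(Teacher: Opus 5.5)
Your proof is correct and takes essentially the same approach as the paper: polarization, the convex-cone property and $\mathsf Q$-stability of $\mathsf H_\infty$ from Lemma~\ref{Lem:convex_cone}, and the fact that $\mathcal H_\infty$ is a vector space. The only difference is organizational: you treat scalar multiples and sums separately and reuse the first part to get $-\mathsf B(f,h)\in\mathsf H_\infty$, whereas the paper handles $f+\lambda h$ in one step by expanding $\mathsf Q(f\pm\lambda h)$.
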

\begin{proof}
The first property follows from the polarization identity and Lemma \ref{Lem:convex_cone}
\[
\pm \mathsf B(f,h) = \frac{1}{2}\mathsf Q( f\pm h)-\frac{1}{2}\mathsf Q(f) -\frac{1}{2}\mathsf Q(h).
\]
We now prove that $\mathcal K_\infty$ is a vector space. Let $f,h\in \mathcal K_\infty$ and $\lambda\in \R$. Hence, we have by definition
    \[
    f\pm\lambda h\in\mathcal H_\infty\subset \mathsf H_\infty,\quad \pm \mathsf Q(f),\pm  \mathsf Q(h) \in \mathsf H_\infty.
    \]
As $\mathsf H_\infty$ is stable under $\mathsf Q$, we have $\mathsf Q(f\pm\lambda h)\in\mathsf H_\infty$. Also, as $\pm \lambda^2 \mathsf Q(h) = \lambda^2 (\pm \mathsf Q(h)),$ we deduce from Lemma \ref{Lem:convex_cone} that $\pm \lambda^2\mathsf Q(h)\in \mathsf H_\infty$. In particular,
    \[
    \pm 2\lambda \mathsf B(f,h) =  \mathsf Q(f\pm\lambda h) - \mathsf Q(f) - \lambda^2\mathsf Q(h)  \in \mathsf H_\infty,
    \]
   and we deduce
        \[
    -\mathsf Q(f + \lambda h) = - \mathsf Q(f) -\lambda^2 \mathsf Q(h) - 2\lambda \mathsf B(f,h) \in \mathsf H_\infty.
    \]
    We thus have $f+\lambda h\in \mathcal K_\infty.$ 
\end{proof}

In other words, the space $\mathcal K_\infty$ is composed of reachable elements $f,h$ (that is elements of $\mathsf H_\infty$) such that $\pm \mathsf Q(\alpha f+\beta h) $ and $\mathsf B(f,h)$ are also reachable.
\subsection{
Saturation for trigonometric polynomials: initialization}
In what follows, $(e_j)_j$ denotes the canonical vectors of $\R^n$. For two vectors $\ell,m\in \Z^n$, we first introduce the notation
\[
\ell\leq m \;(\text{resp. } \ell<m) \quad\text{ if } \quad(\ell,e_j)\leq (m,e_j)\; (\text{resp. }(\ell,e_j)< (m,e_j)),
\]
for any $j\in\{1,\dots,n\}$. We also define
\[
\Lambda_{ X} := \Big\{ \ell\in \Z^n,\; \cos(\ell\cdot\theta),\sin(\ell\cdot\theta) \in  X_\infty \Big\}, \text{ where }  X = \mathcal H,\mathcal K.
\]
Proving Theorem \ref{thm:Hcontainsallfrequencies} reduces to proving $\Lambda_{\mathcal K} = \Z^n$. To this end, we introduce for $L\geq 1$,
\[
\mathfrak C_L := \{ \ell\in\Z^n,\; |\ell|_\infty\leq L\}.
\]
The rest of this section contains an induction proof that $\cup_{L\in\N}\mathfrak C_L\subset\Lambda_{\mathcal K}$. Lemmata \ref{lem:initialization}, \ref{lem:2cube}, \ref{lem:3cube} contain the case $L=1$, $L=2$ and $L=3$ respectively. From there, we perform an induction in Lemma \ref{lem:Induction_step}. 
For a vector $v\in \Z^n,$ we define \[
\supp v :=\{j\in\{1,\dots,n\},\; (v,e_j)\neq 0\}.
\]
Note that, as $\mathsf H_\infty\ni\mathsf Q(\cos(\ell\cdot\theta)) +\mathsf Q(\sin(\ell\cdot\theta)) =\mathsf Q(\ell\cdot\theta), $
\begin{equation}\label{eq:cosandsinimplyell}
    \ell\in \Lambda_{\mathcal H} \quad \Longrightarrow \quad \mathsf Q(\ell\cdot\theta) \in \mathsf H_\infty
\end{equation}

\begin{Lemma}[Initialization]\label{lem:initialization} We have 
\[
\pm\mathsf B(e_j\cdot\theta,e_k\cdot\theta)\in\mathsf H_\infty,\quad j,k\in\{1,\dots,n\}.
\]
    As a consequence,
    \[
    \mathfrak C_1\subset \Lambda_{\mathcal K}, \quad \text{and}\quad\Lambda_{\mathcal H} = \Lambda_{\mathcal K}
    \]
\end{Lemma}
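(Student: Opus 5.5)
The plan is to exploit two facts: $\mathsf H_0=\mathsf E$ is a vector space containing all $\cos(\ell\cdot\theta),\sin(\ell\cdot\theta)$ with $\ell\in\mathfrak C_1$, and the trace identity of Corollary \ref{cor:trace-constant}.

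\textbf{Step 1: frequencies in $\mathfrak C_1$ lie in $\Lambda_{\mathcal H}$.} Since $\mathsf E\subset\mathsf H_\infty$ is a vector space, both $\pm\cos(\ell\cdot\theta)$ and $\pm\sin(\ell\cdot\theta)$ belong to $\mathsf H_\infty$ for $\ell\in\mathfrak C_1$. Hence $\mathfrak C_1\subset\Lambda_{\mathcal H}$. By \eqref{eq:cosandsinimplyell} this gives $\mathsf Q(\ell\cdot\theta)\in\mathsf H_\infty$ for every $\ell\in\mathfrak C_1$. In particular this holds for $\ell=e_j$ and $\ell=e_j\pm e_k$, so $\mathsf Q(\theta_j)$ and $\mathsf Q(\theta_j\pm\theta_k)$ belong to $\mathsf H_\infty$.

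\textbf{Step 2: obtaining $-\mathsf Q(\theta_i)$.} This is the key point, where isometry enters. The constant function $1=\cos(0\cdot\theta)$ lies in $\mathsf E$, so $\pm c^*\in\mathsf H_\infty$. By Corollary \ref{cor:trace-constant},
\[
-\mathsf Q(\theta_i)=-c^*+\sum_{j\neq i}\mathsf Q(\theta_j).
\]
The right-hand side is a sum of elements of $\mathsf H_\infty$, so it lies in $\mathsf H_\infty$ by Lemma \ref{Lem:convex_cone}.

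\textbf{Step 3: the bilinear terms.} Proposition \ref{prop:stability_scalar_product} applies to $f=\theta_j$, $h=\theta_k$, using Steps 1 and 2. It gives $\pm\mathsf B(\theta_j,\theta_k)\in\mathsf H_\infty$, which is the first claim.

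\textbf{Step 4: $-\mathsf Q(\ell\cdot\theta)$ for every $\ell\in\Z^n$.} Expand
\[
\mathsf Q(\ell\cdot\theta)=\sum_{j,k}\ell_j\ell_k\,\mathsf B(\theta_j,\theta_k).
\]
Since each $\pm\mathsf B(\theta_j,\theta_k)$ lies in the convex cone $\mathsf H_\infty$, we get $-\mathsf Q(\ell\cdot\theta)\in\mathsf H_\infty$ for every $\ell\in\Z^n$.

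\textbf{Step 5: $\Lambda_{\mathcal H}=\Lambda_{\mathcal K}$.} The inclusion $\Lambda_{\mathcal K}\subset\Lambda_{\mathcal H}$ is trivial. Conversely, let $\ell\in\Lambda_{\mathcal H}$. From $\mathsf Q(\cos(\ell\cdot\theta))+\mathsf Q(\sin(\ell\cdot\theta))=\mathsf Q(\ell\cdot\theta)$ we get
\[
-\mathsf Q(\cos(\ell\cdot\theta))=\mathsf Q(\sin(\ell\cdot\theta))+\big(-\mathsf Q(\ell\cdot\theta)\big).
\]
The first term is in $\mathsf H_\infty$ because $\sin(\ell\cdot\theta)\in\mathsf H_\infty$ and $\mathsf H_\infty$ is stable under $\mathsf Q$. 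The second term is in $\mathsf H_\infty$ by Step 4. The same argument with cosine and sine exchanged handles $-\mathsf Q(\sin(\ell\cdot\theta))$. Hence $\ell\in\Lambda_{\mathcal K}$, and combining with Step 1 gives $\mathfrak C_1\subset\Lambda_{\mathcal K}$.

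The only nontrivial step is Step 2. There the isometric nature of $\Theta$, through the constancy of $\sum_i|\nabla_g\theta_i|_g^2$, supplies the negative signs that the cone structure alone cannot produce. Everything else is polarization and the convex cone structure.
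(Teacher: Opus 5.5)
Your proposal is correct and follows the paper's proof essentially step for step. It uses the trace identity to get $-\mathsf Q(e_j\cdot\theta)\in\mathsf H_\infty$, then polarization via Proposition~\ref{prop:stability_scalar_product} for $\pm\mathsf B$, then the expansion of $-\mathsf Q(\ell\cdot\theta)$, and finally $-\mathsf Q(\cos(\ell\cdot\theta))=\mathsf Q(\sin(\ell\cdot\theta))-\mathsf Q(\ell\cdot\theta)$. The one point you gloss over is $j=k$ in Step 3, where $e_j+e_j\notin\mathfrak C_1$: there you should either note $\mathsf Q(2\theta_j)=4\mathsf Q(\theta_j)$ and $\mathsf Q(0)=0$, or, as the paper does, read $\pm\mathsf B(\theta_j,\theta_j)=\pm\mathsf Q(\theta_j)$ directly from Steps 1 and 2.
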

\begin{proof}
Clearly $\mathfrak C_1\subset \Lambda_{\mathcal H}$. For any $\ell\in \mathfrak C_1$, from \eqref{eq:cosandsinimplyell} we have $\mathsf Q( \ell\cdot \theta)\in\mathsf H_\infty.$ Hence, from the fact that $\R\subset \mathsf E$ along with from Corollary \ref{cor:trace-constant},
    \begin{equation}\label{eq:-B_in_H}
    -\mathsf Q( e_j\cdot\theta) = -c^* +\sum_{k\neq j } \mathsf Q( e_k\cdot\theta) \in \mathsf H_\infty.
    \end{equation}
   If $j=k$, then \eqref{eq:-B_in_H} yields $\pm \mathsf B(e_j\cdot\theta,e_j\cdot\theta)=\pm\mathsf Q(e_j\cdot\theta) \in\mathsf H_\infty$. For $j\neq k$ as $e_j\pm e_k\in\mathfrak C_1$, by Proposition \ref{prop:stability_scalar_product}
    \begin{equation}\label{eq:tilde_B_in_H}
    \pm \mathsf B(e_j\cdot\theta,e_k\cdot\theta)\in\mathsf H_\infty,\quad j\neq k.
    \end{equation}
    
 The first part of the result is proved.  For the second part of the statement, we know that $\Lambda_{\mathcal K}\subset\Lambda_{\mathcal H}.$ Let $\ell\in\Lambda_{\mathcal H},$ we write $\ell=\sum_{k=1}^n\ell_ke_k$. From \eqref{eq:cosandsinimplyell},
     \[
    \mathsf Q( \ell\cdot \theta)\in  \mathsf H_\infty. 
    \]
    Moreover, from \eqref{eq:-B_in_H} and \eqref{eq:tilde_B_in_H},
    \[
    -\mathsf Q(\ell\cdot\theta) = -\sum_{j,k=1}^n\ell_j\ell_k \mathsf B (e_j\cdot\theta,e_k\cdot\theta)\in\mathsf H_\infty.
    \]
    Hence,
    \[
    \begin{cases}
    - \mathsf Q( \cos(\ell\cdot\theta)) =  \mathsf Q(\sin(\ell\cdot\theta)) - \mathsf Q( \ell\cdot \theta) \in \mathsf H_\infty\\
   - \mathsf Q(  \sin(\ell\cdot\theta))  =  \mathsf Q( \cos(\ell\cdot\theta)) - \mathsf Q( \ell\cdot \theta) \in \mathsf H_\infty,
\end{cases}
    \]
and consequently $\cos(\ell\cdot\theta),\sin(\ell\cdot\theta)\in \mathcal K_\infty.$
\end{proof}

\subsection{Saturation for trigonometric polynomials: definition of \textit{pivot}--vectors}
Before going further and to simplify the computations below, for $X= \mathcal H, \mathcal K$, we consider
\[
 X_\infty^\C :=  X_\infty \oplus i  X_\infty, 
\]
and we have that the sets $\Lambda_X$ can be identified with
\[
\Lambda_X = \Big\{\ell\in\Z^n,\; e^{i\ell\cdot\theta}\in X^\C_\infty\Big\}.\]
 Moreover, we extend the real bilinear form $\mathsf B$ to $\C$ as a $\C$ bilinear form. Note that by Proposition \ref{prop:stability_scalar_product}, $\mathsf B (\mathcal K^\C_\infty,\mathcal K^\C_\infty)\subset \mathcal H_\infty^\C.$ For $p,q,m\in\Z^n$, we introduce the following notation
\[
\mathsf P( p,q;m) =  \mathsf B (e^{i(p+m)\cdot\theta},e^{i(q-m)\cdot\theta} ) + \mathsf B(e^{i(p-m)\cdot\theta},e^{i(q+m)\cdot\theta} ), 
\]
and we shall use the following polarization formula
\begin{equation}\label{eq:polarization}
\mathsf P( p,q;m) = 2( \mathsf Q(m\cdot\theta)e^{i(p+q)\cdot\theta}+\mathsf B( e^{ip \cdot \theta} , e^{iq\cdot\theta})) .
\end{equation}
In what follows, we say that a vector $p\in\Z^n$ is a $v$--\textit{pivot} vector, $v\in\Z^n$, if 
\begin{equation}\label{eq:pivot-def}
    p,p\pm v\in \Lambda_{\mathcal K}.
\end{equation}
At several places, one shall use the following result.
\begin{Lemma}\label{lem:pivotdansH}
    If $p,q\in\Z^n$ are $v$--\textit{pivot} vectors, $v\in\Z^n$, then
    \[
    \mathsf P( p,q;v) - 2 \mathsf B( e^{ip \cdot \theta} , e^{iq\cdot\theta}) \in\mathcal H^\C_\infty.
    \]
    In particular, if $p,q\in\Z^n$ are $e_j$--\textit{pivot} vectors for any $j\in\{1,\dots,n\}$, then $p+q\in\Lambda_{\mathcal K}.$
\end{Lemma}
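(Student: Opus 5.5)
First, show that $\mathsf P(p,q;v)$ lies in $\mathcal H^\C_\infty$ for $v$--pivot vectors $p,q$. By definition, $p\pm v$ and $q\pm v$ belong to $\Lambda_{\mathcal K}$, so the four exponentials $e^{i(p\pm v)\cdot\theta}$ and $e^{i(q\pm v)\cdot\theta}$ lie in $\mathcal K^\C_\infty$. By Proposition \ref{prop:stability_scalar_product} and the remark preceding the definition of $\mathsf P$, we have $\mathsf B(\mathcal K^\C_\infty,\mathcal K^\C_\infty)\subset\mathcal H^\C_\infty$. Hence both terms $\mathsf B(e^{i(p+v)\cdot\theta},e^{i(q-v)\cdot\theta})$ and $\mathsf B(e^{i(p-v)\cdot\theta},e^{i(q+v)\cdot\theta})$ lie in $\mathcal H^\C_\infty$, and so does their sum $\mathsf P(p,q;v)$. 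Likewise $p,q\in\Lambda_{\mathcal K}$ gives $\mathsf B(e^{ip\cdot\theta},e^{iq\cdot\theta})\in\mathcal H^\C_\infty$. Since $\mathcal H^\C_\infty$ is a complex vector space (because $\mathcal H_\infty$ is a real vector space), the difference $\mathsf P(p,q;v)-2\mathsf B(e^{ip\cdot\theta},e^{iq\cdot\theta})$ is in $\mathcal H^\C_\infty$. By the polarization formula \eqref{eq:polarization}, this difference equals $2\mathsf Q(v\cdot\theta)e^{i(p+q)\cdot\theta}$. I will record this form explicitly, since it is what the second part uses.

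For the second part, assume $p,q$ are $e_j$--pivots for every $j$. Applying the first part with $v=e_j$ gives $\mathsf Q(e_j\cdot\theta)\,e^{i(p+q)\cdot\theta}\in\mathcal H^\C_\infty$ for each $j$. Summing over $j\in\{1,\dots,n\}$ and using Corollary \ref{cor:trace-constant}, namely $\sum_j\mathsf Q(\theta_j)=c^*$ with $c^*=d\|\Theta\|_{L^\infty}^{-2}>0$, we get $c^*e^{i(p+q)\cdot\theta}\in\mathcal H^\C_\infty$. Dividing by $c^*$ gives $e^{i(p+q)\cdot\theta}\in\mathcal H^\C_\infty$, so its real and imaginary parts, $\cos((p+q)\cdot\theta)$ and $\sin((p+q)\cdot\theta)$, belong to $\mathcal H_\infty$. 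Here we need that $X^\C_\infty=X_\infty\oplus iX_\infty$ with real functions, so real and imaginary parts can be separated; this holds because $\cos$ and $\sin$ of real arguments are real. Thus $p+q\in\Lambda_{\mathcal H}$, and Lemma \ref{lem:initialization} ($\Lambda_{\mathcal H}=\Lambda_{\mathcal K}$) gives $p+q\in\Lambda_{\mathcal K}$.

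The only delicate point is the complex bookkeeping. The $\C$-bilinear extension of $\mathsf B$ applied to elements of $\mathcal K^\C_\infty$ must land in $\mathcal H^\C_\infty$. Expanding $\mathsf B(a+ib,c+id)$ gives real combinations of $\pm\mathsf B$ of elements of the real vector space $\mathcal K_\infty$, and $\pm\mathsf B(\mathcal K_\infty,\mathcal K_\infty)\subset\mathsf H_\infty$ by Proposition \ref{prop:stability_scalar_product}. That proposition applies because $\mathcal K_\infty$ is a vector space, so $f\pm h\in\mathcal K_\infty$ and hence $\mathsf Q(f\pm h)\in\mathsf H_\infty$, while $-\mathsf Q(f),-\mathsf Q(h)\in\mathsf H_\infty$. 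I expect this to be routine and would just cite the remark already made in the paper.
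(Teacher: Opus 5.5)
Your proposal is correct and follows the paper's proof. The first part is Proposition~\ref{prop:stability_scalar_product} applied to the pivot exponentials. The second part sums $\mathsf P(p,q;e_j)-2\mathsf B(e^{ip\cdot\theta},e^{iq\cdot\theta})=2\mathsf Q(e_j\cdot\theta)e^{i(p+q)\cdot\theta}$ over $j$, uses Corollary~\ref{cor:trace-constant} to get $2c^*e^{i(p+q)\cdot\theta}\in\mathcal H^\C_\infty$, and concludes with $\Lambda_{\mathcal H}=\Lambda_{\mathcal K}$ from Lemma~\ref{lem:initialization}. You also spell out the complex bookkeeping, namely $\mathsf B(\mathcal K^\C_\infty,\mathcal K^\C_\infty)\subset\mathcal H^\C_\infty$ and the separation of real and imaginary parts, which the paper leaves implicit.
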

\begin{proof}
    The first part follows from a direct application of Proposition \ref{prop:stability_scalar_product}. Moreover, from \eqref{eq:polarization} along with Corollary \ref{cor:trace-constant},
    \begin{equation}\label{eq:using_polarisation_formula}
    \sum_{j=1}^n\Big(\mathsf P( p,q;e_j) - 2 \mathsf B( e^{ip \cdot \theta} , e^{iq\cdot\theta})\Big) = 2e^{i(p+q)\cdot\theta}\sum_{j=1}^n\mathsf Q(e_j\cdot\theta)= 2c^*e^{i(p+q)\cdot\theta}.
    \end{equation}
Hence $p+q\in\Lambda_{\mathcal H}$, and from Lemma \ref{lem:initialization}, $p+q\in\Lambda_{\mathcal K}.$
\end{proof}
As an intermediate step, we shall need the following result.
\begin{Lemma}\label{lem:star-push}
For any $L\in \N^*$,
\[
\mathfrak C_L\subset \Lambda_{\mathcal K} \quad \Longrightarrow \quad (L+1)\mathfrak C_1\subset \Lambda_{\mathcal K}
\]
\end{Lemma}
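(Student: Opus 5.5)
The plan is to use the polarization identity \eqref{eq:polarization} one coordinate at a time, and then sum over coordinates using Corollary \ref{cor:trace-constant}, as in the proof of Lemma \ref{lem:pivotdansH}. Fix $\varepsilon\in\mathfrak C_1$ and set $v=(L+1)\varepsilon$. Since $\Lambda_{\mathcal H}=\Lambda_{\mathcal K}$ by Lemma \ref{lem:initialization}, it suffices to show $e^{iv\cdot\theta}\in\mathcal H_\infty^\C$. By Corollary \ref{cor:trace-constant},
\[
c^*e^{iv\cdot\theta}=\sum_{j=1}^n \mathsf Q(e_j\cdot\theta)e^{iv\cdot\theta},
\]
so it is enough to prove $\mathsf Q(e_j\cdot\theta)e^{iv\cdot\theta}\in\mathcal H^\C_\infty$ for each $j$.

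The key observation is that the splitting $v=p+q$ may depend on $j$. By \eqref{eq:polarization}, $\mathsf Q(e_j\cdot\theta)e^{iv\cdot\theta}=\tfrac12\mathsf P(p,q;e_j)-\mathsf B(e^{ip\cdot\theta},e^{iq\cdot\theta})$. By Proposition \ref{prop:stability_scalar_product} (the inclusion $\mathsf B(\mathcal K^\C_\infty,\mathcal K^\C_\infty)\subset\mathcal H^\C_\infty$), the right-hand side lies in $\mathcal H^\C_\infty$ as soon as $p,q$ are $e_j$--pivot vectors, that is $p,\,p\pm e_j,\,q,\,q\pm e_j\in\Lambda_{\mathcal K}$. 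Under the hypothesis $\mathfrak C_L\subset\Lambda_{\mathcal K}$, it suffices that $p,q\in\mathfrak C_L$ and that $|p_j\pm1|,|q_j\pm1|\leq L$; pivoting only in the $e_j$ direction constrains only the $j$-th coordinate.

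I would then choose the splitting coordinatewise:
\begin{itemize}
\item For $k\neq j$, take $p_k=L\varepsilon_k$ and $q_k=\varepsilon_k$. Both lie in $[-L,L]$, and no $\pm1$ shift is applied in these coordinates.
\item For $k=j$ with $\varepsilon_j=0$, take $p_j=q_j=0$, so that $p_j\pm1,q_j\pm1\in[-1,1]$.
\item For $k=j$ with $\varepsilon_j\neq0$, we need $p_j+q_j=(L+1)\varepsilon_j$ with $|p_j|,|q_j|\leq L-1$. Taking $p_j=\lceil (L+1)/2\rceil\varepsilon_j$ and $q_j=\lfloor (L+1)/2\rfloor\varepsilon_j$ works exactly when $\lceil (L+1)/2\rceil\leq L-1$, i.e. $L\geq3$.
\end{itemize}
This settles all $L\geq3$.

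The main obstacle is the small cases $L=1,2$ with $j\in\supp\varepsilon$, where no admissible splitting of the $j$-th coordinate exists. For these I would try the polarization formula with a different shift $m$. For instance, for $L=2$, take $m=e_j+e_k$ or $e_j-e_k$ with $k\notin\supp\varepsilon$ (when such $k$ exists). This produces $\mathsf Q((e_j\pm e_k)\cdot\theta)e^{iv\cdot\theta}$, from which $\mathsf Q(e_j\cdot\theta)e^{iv\cdot\theta}$ is recovered by subtracting the already obtained terms $\mathsf Q(e_k\cdot\theta)e^{iv\cdot\theta}$ and the cross terms $\mathsf B(e_j\cdot\theta,e_k\cdot\theta)e^{iv\cdot\theta}$. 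When $\supp\varepsilon$ is everything, or when this still fails, I would instead establish the $L=1$ case (namely $2\mathfrak C_1\subset\Lambda_{\mathcal K}$) by a direct computation. There one writes $e^{2i\varepsilon\cdot\theta}$ through $\mathsf Q(e^{i\varepsilon\cdot\theta})=-e^{2i\varepsilon\cdot\theta}\mathsf Q(\varepsilon\cdot\theta)$ with $\mathsf Q$ extended $\C$-bilinearly. One then uses Lemma \ref{lem:initialization}, which gives $\pm\mathsf B(e_j\cdot\theta,e_k\cdot\theta)\in\mathsf H_\infty$, to peel off the factor $\mathsf Q(\varepsilon\cdot\theta)$ against the constant $c^*$. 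I expect this low-$L$ bookkeeping to be the delicate part, possibly requiring the $L=2,3$ cases to be interleaved as in Lemmata \ref{lem:2cube} and \ref{lem:3cube}.
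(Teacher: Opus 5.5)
Your coordinatewise, $j$-dependent splitting is correct for $L\geq 3$. With $p_k=L\varepsilon_k$, $q_k=\varepsilon_k$ for $k\neq j$, and the $j$-th coordinate split as $\lceil\frac{L+1}{2}\rceil\varepsilon_j+\lfloor\frac{L+1}{2}\rfloor\varepsilon_j$, the vectors $p,q$ are $e_j$--pivots inside $\mathfrak C_L$, and summing over $j$ gives $c^*e^{iv\cdot\theta}\in\mathcal H^\C_\infty$.

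\textbf{The gap.} The lemma is stated for all $L\in\N^*$, and the cases you leave open, $L=1,2$, are exactly the ones the paper uses, in Lemmata \ref{lem:2cube} and \ref{lem:3cube}. For $L\geq3$, Lemma \ref{lem:Induction_step} never invokes this lemma. Your suggested repairs do not close the gap.
\begin{itemize}
\item The shift $m=e_j\pm e_k$ meets the same obstruction. The form $\mathsf P(p,q;m)$ requires $p\pm m,\,q\pm m\in\Lambda_{\mathcal K}$, whose $j$-th coordinates are $p_j\pm1$ and $q_j\pm1$. So you again need $|p_j|,|q_j|\leq L-1=1$, which is incompatible with $p_j+q_j=3$. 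Using only $\mathfrak C_L\subset\Lambda_{\mathcal K}$, no shift with $m_j\neq0$ can reach a direction $j\in\supp\varepsilon$ when $L\leq 2$, so pivoting in support directions is hopeless.
\item For $L=1$, the identity $\mathsf B(e^{i\varepsilon\cdot\theta},e^{i\varepsilon\cdot\theta})=-e^{2i\varepsilon\cdot\theta}\mathsf Q(\varepsilon\cdot\theta)$ is the right start. However, $\mathsf Q(\varepsilon\cdot\theta)$ contains the cross terms $\varepsilon_j\varepsilon_k\mathsf B(e_j\cdot\theta,e_k\cdot\theta)$, $j\neq k$ in $\supp\varepsilon$, and you need them \emph{multiplied by} $e^{iv\cdot\theta}$. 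Lemma \ref{lem:initialization} only gives $\pm\mathsf B(e_j\cdot\theta,e_k\cdot\theta)\in\mathsf H_\infty$ without that factor. Since $\mathsf H_\infty$ has no multiplicative structure available, nothing can be ``peeled off'' against $c^*$.
\end{itemize}

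\textbf{The missing idea.} The paper does not pivot in the support directions at all. The unshifted product $\mathsf B(e^{iL\varepsilon\cdot\theta},e^{i\varepsilon\cdot\theta})=-Le^{iv\cdot\theta}\mathsf Q(\varepsilon\cdot\theta)$, with $L\varepsilon,\varepsilon\in\mathfrak C_L$, is available for every $L\geq 1$. It yields $e^{iv\cdot\theta}\sum_{j\in\supp\varepsilon}\mathsf Q(e_j\cdot\theta)$ up to the cross terms. The cross terms are removed by
\[
\sum_{j\in\supp v}\mathsf B\big(e^{i(v-v_je_j)\cdot\theta},e^{iv_je_j\cdot\theta}\big)=-(L+1)^2e^{iv\cdot\theta}\sum_{j\neq k}\varepsilon_j\varepsilon_k\mathsf B(e_j\cdot\theta,e_k\cdot\theta),
\]
where $v-v_je_j$ and $v_je_j$ lie in $(L+1)\mathfrak C_1$ with strictly smaller support. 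This is why the paper argues by induction on $\Card(\supp v)$; the base case $v=(L+1)e_1$ has no cross terms. The directions outside the support are handled by the pivots $p=L\varepsilon$, $q=\varepsilon$, exactly as in your argument, so the proof works uniformly for all $L\geq1$.
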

    \begin{proof}
      We perform an induction on $\Card(\supp \ell)$, for $\ell \in (L+1)\mathfrak C_1$.\\
      
   \begin{itemize}
       \item $\Card(\supp \ell) =1$. Since the cubes $\mathfrak C_L$ are invariant under permutations and sign changes of coordinates, we may assume, without loss of generality, that $\ell=(L+1)e_1$. Thus,
        \begin{equation*}
\sum_{j=2}^n\mathsf P(Le_1,e_1;e_j) - 2\Big(n-1+\frac1L\Big) \mathsf B (e^{ie_1 \cdot\theta},e^{iLe_1 \cdot\theta})
=2e^{i(L+1)e_1\cdot\theta} \sum_{j=1}^n \mathsf Q( e_j\cdot\theta)
=2c^*e^{i(L+1)e_1\cdot\theta}.
\end{equation*}
       from Corollary \ref{cor:trace-constant}. As the left hand side is in $\mathcal H^\C_\infty$ by Proposition \ref{prop:stability_scalar_product}, we deduce that $(L+1)e_j\in \Lambda_{\mathcal K}$, $j\in\{1,\dots,n\},$ by Lemma \ref{lem:initialization}.\\
       
       \item Assume that for some $J\in\{1,\dots,n-1\},$ we have $\{\ell \in (L+1)\mathfrak C_1,\; \Card(\supp \ell)\leq J\}\subset \Lambda_{\mathcal 
       K}$. Let $\ell\in (L+1)\mathfrak C_1$ such that $\Card(\supp \ell)=J+1.$ By symmetry of the problem, we may assume
       \[
       \ell = (L+1)\sum_{j=1}^{J+1}e_j =: (L+1)\ell'.
       \]
      Thus, arguing as above,
  \begin{multline*}
\frac{1}{(L+1)^2}\sum_{j=1}^{J+1} \mathsf B\big(e^{i(L+1)(\ell'-e_j)\cdot\theta},e^{i(L+1)e_j\cdot\theta}\big)
+\frac12\sum_{j=J+2}^n \mathsf P(L\ell',\ell';e_j)\\
-\Big(n-J-1+\frac1L\Big) \mathsf B\big( e^{iL\ell'\cdot\theta},e^{i\ell'\cdot\theta}\big)
= e^{i\ell\cdot\theta}\sum_{j=1}^n \mathsf Q(e_j\cdot\theta) = c^*e^{i\ell\cdot\theta}.
\end{multline*}
        from Corollary \ref{cor:trace-constant}. As the left hand side is in $\mathcal H^\C_\infty$ by Proposition \ref{prop:stability_scalar_product}, we deduce that $\ell\in \Lambda_{\mathcal K}$ by Lemma \ref{lem:initialization}.
        \end{itemize}
    \end{proof}

\subsection{
Saturation for trigonometric polynomials: the
\texorpdfstring{$2$--cube}{2-cube}}
\begin{Lemma}\label{lem:2cube}
    We have 
    \[
    \mathfrak C_2 \subset \Lambda_{\mathcal K}.
    \]
\end{Lemma}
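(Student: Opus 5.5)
The plan is to bootstrap from $\mathfrak C_1\subset\Lambda_{\mathcal K}$ (Lemma \ref{lem:initialization}) and $2\mathfrak C_1\subset\Lambda_{\mathcal K}$ (Lemma \ref{lem:star-push} with $L=1$). The goal is to reach every $\ell\in\mathfrak C_2$, i.e. every vector with entries in $\{0,\pm1,\pm2\}$. By the symmetry of $\mathfrak C_2$ under permutations and sign changes of coordinates, we may assume
\[
\ell=2\sum_{j\in A}e_j+\sum_{j\in B}e_j,\qquad A\cap B=\emptyset .
\]
The basic mechanism is the one already used in Lemmas \ref{lem:pivotdansH} and \ref{lem:star-push}. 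Write $\ell=p+q$ with $p,q\in\Lambda_{\mathcal K}$. Then, by \eqref{eq:polarization},
\[
c^*e^{i\ell\cdot\theta}=\sum_{j=1}^n e^{i\ell\cdot\theta}\mathsf Q(e_j\cdot\theta),
\]
and each summand is rewritten as $\tfrac12\mathsf P(p,q;e_j)-\mathsf B(e^{ip\cdot\theta},e^{iq\cdot\theta})$. This lies in $\mathcal H^\C_\infty$ by Proposition \ref{prop:stability_scalar_product} as soon as $p\pm e_j,\ q\mp e_j\in\Lambda_{\mathcal K}$. The freedom is that the splitting $(p,q)=(p^{(j)},q^{(j)})$ may be chosen \emph{depending on $j$}, since only the sum $p+q=\ell$ matters. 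Concluding $\ell\in\Lambda_{\mathcal H}=\Lambda_{\mathcal K}$ then follows from Lemma \ref{lem:initialization}.

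In order, I would carry out the following steps.

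\emph{Step 1.} Treat $A=\emptyset$, i.e. $\ell\in\mathfrak C_1$; this case is known.

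\emph{Step 2.} Treat vectors with exactly one coordinate of modulus $2$, say $\ell=2e_1+\sum_{j\in B}e_j$. Take $p=e_1+\sum_{j\in B}e_j$ and $q=e_1$, both in $\mathfrak C_1$. The directions are then handled as follows.
\begin{itemize}
\item For $j\notin B\cup\{1\}$, the shifted vectors $p\pm e_j$ and $q\mp e_j$ stay in $\mathfrak C_1$.
\item For $j\in B$, I would instead use the splitting $p'=\ell-e_j$, $q'=e_j$. Since $p'$ has one fewer unit entry, it is available by an inner induction on $\Card B$, exactly as in the second bullet of Lemma \ref{lem:star-push}. The diagonal contribution is then handled by the term $\mathsf B(e^{ip'\cdot\theta},e^{iq'\cdot\theta})$, which is legitimate since $p',q'\in\Lambda_{\mathcal K}$; this mirrors the weighted sum $\tfrac{1}{(L+1)^2}\sum_j\mathsf B(\cdot,\cdot)$ there.
\item For $j=1$, I would use $p''=\ell-e_1$, $q''=e_1$ in the same way, with the base case $\ell=2e_1$ coming from Lemma \ref{lem:star-push}.
\end{itemize}

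\emph{Step 3.} Induct on $\Card A$. For $\ell$ with $\Card A=a+1$, split $\ell=(\ell-2e_{j_0})+2e_{j_0}$ or $\ell=(\ell-e_{j})+e_j$, choosing per direction $j$ a splitting whose two pieces lie in the class already obtained (fewer $2$'s, or fewer total mass). Then:
\begin{itemize}
\item Directions $j\in A$ are absorbed by the diagonal $\mathsf B$ terms, as in Step 2.
\item Directions $j\notin A\cup B$ are absorbed by $\mathsf P$ with a splitting where both $p_j$ and $q_j$ are $0$, so that the shifts only produce entries $\pm1$.
\end{itemize}
One must check, as in Lemma \ref{lem:star-push}, that the scalar coefficients in front of the various $\mathsf B(e^{ip\cdot\theta},e^{iq\cdot\theta})$ terms can be matched so that the $\mathsf Q(e_j\cdot\theta)$ terms add up exactly to $c^*e^{i\ell\cdot\theta}$. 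This uses the identity
\[
\mathsf B(e^{ip\cdot\theta},e^{iq\cdot\theta})=-e^{i(p+q)\cdot\theta}\,\mathsf B(p\cdot\theta,q\cdot\theta),
\]
together with bilinearity in $p,q$.

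The main obstacle is the bookkeeping in Step 3. One must ensure that every shifted vector $p^{(j)}\pm e_j$, $q^{(j)}\mp e_j$ lies in a class already known, i.e. in $\mathfrak C_1$, in $2\mathfrak C_1$, or in $\mathfrak C_2$ with strictly smaller $(\Card A,\Card B)$ in lexicographic order. One must also ensure that the linear cross terms $\mathsf B(p\cdot\theta,e_j\cdot\theta)$ cancel, which is precisely why the symmetric combination $\mathsf P$ or a pure diagonal $\mathsf B$ term must be used in each direction. I would first try the ordering that decreases $\Card A$ and, within it, $\Card B$, and verify the coefficient identity on the model case $\ell=2e_1+e_2$ before writing the general case.
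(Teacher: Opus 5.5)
There is a genuine gap in Step 2, and it sits exactly in the coefficient check you postponed. For a splitting $\ell=a+b$,
\[
\mathsf B(e^{ia\cdot\theta},e^{ib\cdot\theta})=-e^{i\ell\cdot\theta}\sum_{j,k=1}^n a_jb_k\,\mathsf B(e_j\cdot\theta,e_k\cdot\theta),
\]
so the coefficient of $e^{i\ell\cdot\theta}\mathsf Q(e_j\cdot\theta)$ is $-a_jb_j$. Every $\mathsf P$-term is a sum of two such contributions.

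For $j\in B$ we have $\ell_j=1$. Every splitting you use splits the $j$-th coordinate as $0+1$ or $1+0$: this holds for $(e_1+s,e_1)$ with $s=\sum_{k\in B}e_k$, for its shifts by $\pm e_j$ with $j\notin\supp\ell$, for $(\ell-e_j,e_j)$, and for $(\ell-e_1,e_1)$, which is just $(p,q)$ again. Hence $\mathsf Q(e_j\cdot\theta)$, $j\in B$, never appears. In particular
\[
\mathsf B(e^{i(\ell-e_j)\cdot\theta},e^{ie_j\cdot\theta})=-e^{i\ell\cdot\theta}\Big(2\mathsf B(e_1\cdot\theta,e_j\cdot\theta)+\sum_{k\in B\setminus\{j\}}\mathsf B(e_k\cdot\theta,e_j\cdot\theta)\Big)
\]
consists of cross terms only. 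It does not handle the diagonal contribution, as you claim.

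In the model case $\ell=2e_1+e_2$, your splittings are $(e_1+e_2,e_1)$, $(2e_1,e_2)$ and the shifts of the first by $\pm e_j$, $j\geq3$. All of them have $a_2b_2=0$, so $c^*e^{i\ell\cdot\theta}=e^{i\ell\cdot\theta}\sum_j\mathsf Q(e_j\cdot\theta)$ cannot be assembled.

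The analogy with Lemma~\ref{lem:star-push} is misleading. There the terms $\mathsf B(e^{i(L+1)(\ell'-e_j)\cdot\theta},e^{i(L+1)e_j\cdot\theta})$ are also pure cross terms. The diagonal comes from the parallel splitting $\ell=L\ell'+\ell'$, through $\mathsf Q(\ell'\cdot\theta)$. A vector $2e_1+s$ with $s\neq0$ has no nontrivial parallel splitting inside $\mathfrak C_2$.

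The missing idea is to split the coordinates $j\in B$ as $(-1)+2$. This forces you to use vectors of $2\mathfrak C_1$ with several entries equal to $2$, such as $2e_1+2e_2$ (cf.\ Figure~\ref{pic:2cube}). The paper sums $\mathsf B(e^{i(\ell-m_\varepsilon)\cdot\theta},e^{im_\varepsilon\cdot\theta})$ over $m_\varepsilon=\sum_{j\in B}\varepsilon_je_j$, $\varepsilon\in\{-1,1\}^B$, where $\ell-m_\varepsilon\in2\mathfrak C_1$. The sign average kills every cross term and leaves a nonzero multiple of $e^{i\ell\cdot\theta}\sum_{j\in B}\mathsf Q(e_j\cdot\theta)$. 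Direction $1$ comes from
\[
\mathsf B(e^{2ie_1\cdot\theta},e^{is\cdot\theta})-2\mathsf B(e^{ie_1\cdot\theta},e^{i(e_1+s)\cdot\theta})=2\mathsf Q(e_1\cdot\theta)e^{i\ell\cdot\theta},
\]
and the directions outside $\supp\ell$ come from your pivot.

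Within your own scheme, adding the splittings $(2e_1,s)$ and $(2e_1+2s,-s)$ would also suffice. Their sum produces $2e^{i\ell\cdot\theta}\mathsf Q(s\cdot\theta)$. Together with $(2e_1,s)$, your terms $\sum_{j\in B}\mathsf B(e^{i(\ell-e_j)\cdot\theta},e^{ie_j\cdot\theta})$ then isolate $e^{i\ell\cdot\theta}\sum_{j\in B}\mathsf Q(e_j\cdot\theta)$.

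Step 3 is also incomplete. Directions $j\in B$ are not treated, and the cross terms from the splittings used for $j\in A$ are not shown to cancel. However, no induction on $\Card A$ is needed. Once every vector of $\mathfrak C_2$ with exactly one entry of modulus $2$ is known (with all signs), take any remaining $\ell\geq0$. Then $p=\sum_j\lfloor\ell_j/2\rfloor e_j$ and $\ell-p$ are $e_j$--pivots for every $j$, because their shifts have at most one entry of modulus $2$, and Lemma~\ref{lem:pivotdansH} concludes.
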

\begin{proof}
     Let $\ell\in\mathfrak C_2$. By symmetry of the cubes, we may assume, without loss of generality, that $\ell\geq 0.$ Note that from Lemma \ref{lem:initialization} along with Lemma \ref{lem:star-push}, we have
   \begin{equation}\label{eq:2C_1_in_Lambda}
   2\mathfrak C_1 \subset \Lambda_{\mathcal K}.
   \end{equation}
   We denote by $R_\ell:=\Card\{ |(\ell,e_j)|=2,\; j\in\{1,\dots,n\} \}$. We separate two cases.\\
\begin{itemize}
    \item Case $R_\ell=1.$ Without loss of generality, we can assume that
    $
    \ell = 2e_1 +\sum_{j=2}^{J} e_j,
    $
    for some $2\leq J\leq n.$ On the one hand,
    \begin{equation}\label{eq:obtain1}
  \mathcal H_\infty^\C\ni   \mathsf B(e^{i 2e_1\cdot\theta},e^{i(\ell-2e_1)\cdot\theta})-2 \mathsf B(e^{i e_1\cdot\theta},e^{i(\ell-e_1)\cdot\theta})  = 2 \mathsf Q(e_1\cdot\theta) e^{i\ell\cdot\theta}.
    \end{equation}
    On the other hand, we consider
    \[
    m_\varepsilon = \sum_{j=2}^{J} \varepsilon_j e_j,\quad \varepsilon\in \{-1,1\}^{J-1}.
    \]
    Remark that, from Lemmata \ref{lem:initialization} and \ref{lem:star-push},
    \begin{equation}\label{eq:m_varepsilon_in_Lambda}
    m_\varepsilon \in \mathfrak C_1,\quad 
    \ell-m_\varepsilon\in 2\mathfrak C_1,
    \end{equation}
    and
    \begin{align*}
  \sum_{\varepsilon\in \{-1,1\}^{J-1} } \mathsf B (e^{i(\ell-m_\varepsilon)\cdot\theta} , e^{i m_{\varepsilon}\cdot\theta} ) & = e^{i\ell\cdot\theta} \sum_{\varepsilon\in \{-1,1\}^{J-1} } (\mathsf Q(m_\varepsilon\cdot \theta) -\mathsf B(\ell\cdot\theta,m_\varepsilon\cdot\theta)).
 \end{align*}
 Yet,
 \begin{align*}
 \sum_{\varepsilon\in \{-1,1\}^{J-1} } \mathsf Q(m_\varepsilon\cdot \theta)   = \sum_{j,k=2}^J\sum_{\varepsilon\in \{-1,1\}^{J-1} } \varepsilon_{j}\varepsilon_{k}\mathsf B(e_j\cdot\theta,e_k\cdot \theta)
  =2^{J-1} \sum_{j=2}^J\mathsf Q(e_j\cdot\theta)
 \end{align*}
 as for $2\leq j\neq k\leq J$ fixed, we have the disjoint union with equal cardinals
 \[
 \{-1,1\}^{J-1} = \{ \varepsilon \in\{-1,1\}^{J-1},\varepsilon_j=\varepsilon_k\} \cup  \{ \varepsilon \in\{-1,1\}^{J-1},\varepsilon_j=-\varepsilon_k\}.
 \]
 Arguing the same way,
 \[
 \sum_{\varepsilon\in \{-1,1\}^{J-1} } \mathsf B(\ell\cdot\theta,m_\varepsilon\cdot\theta) =0.
 \]
With \eqref{eq:m_varepsilon_in_Lambda}, we then have obtained
 \begin{equation}\label{eq:obtainfrom2toJ}
 \mathcal H^\C_\infty\ni 2^{2-J}\sum_{\varepsilon\in \{-1,1\}^{J-1} } \mathsf B (e^{i(\ell-m_\varepsilon)\cdot\theta} , e^{i m_{\varepsilon}\cdot\theta} ) = 2e^{i\ell\cdot\theta}\sum_{j=2}^J\mathsf Q(e_j\cdot\theta).
 \end{equation}
 Finally, considering 
 \[
 p= \sum_{j=1}^Je_j,\quad \ell-p =e_1,
 \]
 we have that $p,\ell-p$, are $e_j$--\textit{pivot} vectors in the sense of \eqref{eq:pivot-def}, for any $j\in\{J+1,\dots,n\}$ and thus
 \begin{equation}\label{eq:fromJ+1ton}
\mathcal H^\C_\infty\ni \sum_{j=J+1}^n\Big(\mathsf P(p,\ell-p;e_j) -2\mathsf B( e^{ip \cdot \theta} , e^{i(\ell-p)\cdot\theta})\Big) = 2e^{i\ell\cdot\theta}\sum_{j=J+1}^n \mathsf Q(e_j\cdot\theta).
 \end{equation}
 Combining \eqref{eq:obtain1},\eqref{eq:obtainfrom2toJ} and \eqref{eq:fromJ+1ton}, we deduce
 \begin{multline*}
 2c^*e^{i\ell\cdot\theta}=2e^{i\ell\cdot\theta}\sum_{j=1}^n\mathsf Q(e_j\cdot\theta) =  \mathsf B(e^{i 2e_1\cdot\theta},e^{i(\ell-2e_1)\cdot\theta})\\
 -2 \mathsf B(e^{i e_1\cdot\theta},e^{i(\ell-e_1)\cdot\theta})
 +2^{2-J}\sum_{\varepsilon\in \{-1,1\}^{J-1} }\mathsf B (e^{i(\ell-m_\varepsilon)\cdot\theta} , e^{i m_{\varepsilon}\cdot\theta} ) \\+\sum_{j=J+1}^n\Big(\mathsf P(p,\ell-p;e_j) -2\mathsf B( e^{ip \cdot \theta} , e^{i(\ell-p)\cdot\theta})\Big)\in\mathcal H_\infty^\C,
 \end{multline*}
 which proves that 
 \begin{equation}\label{eq:R=1IncaseC2}
     \{\ell\in\mathfrak C_2,\;R_\ell=1\}\subset\Lambda_{\mathcal K}.
 \end{equation}
    \item Case $R_\ell>1.$ In such a case, we assume without loss of generality, that $\ell=\sum_{j=1}^n \ell_j e_j$, with $0\leq \ell_j\leq 2$. We set
    \[
    p =\sum_{j=1}^n\lfloor \frac{\ell_j}{2}\rfloor e_j,\quad \ell-p  =\sum_{j=1}^n\lceil \frac{\ell_j}{2}\rceil e_j.
    \]
    Note that $p$ and $\ell-p$ are $e_j$--\textit{pivot} vectors, in the sense of \eqref{eq:pivot-def}, for any $j\in\{1,\dots,n\}$. Indeed, $0\leq \lfloor \tfrac{\ell_j}{2}\rfloor, \lceil \tfrac{\ell_j}{2}\rceil\leq 1$ for $j\in\{1,\dots,n\},$ and thus
    \[
    p\pm e_j, \ell-p\pm e_j \in  \mathfrak C_1\cup \{ m\in\mathfrak C_2,\; R_m=1\}\subset \Lambda_{\mathcal K},
    \]
    from \eqref{eq:R=1IncaseC2}. From Lemma \ref{lem:pivotdansH}, $\ell\in\Lambda_{\mathcal K}$. 
\end{itemize}
\end{proof}
\subsection{
Saturation for trigonometric polynomials: the
\texorpdfstring{$3$--cube}{3-cube}}
\begin{Lemma}\label{lem:3cube}
    We have 
    \[
    \mathfrak C_3 \subset \Lambda_{\mathcal K}.
    \]
    \end{Lemma}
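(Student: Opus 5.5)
Assume $\ell\geq 0$ by symmetry of the cubes. By Lemma \ref{lem:2cube} and Lemma \ref{lem:star-push} with $L=2$, we already know $\mathfrak C_2\subset\Lambda_{\mathcal K}$ and $3\mathfrak C_1\subset\Lambda_{\mathcal K}$. The idea is the same as for the $2$--cube: split $\ell=p+(\ell-p)$ with both pieces pivot vectors in every direction $e_j$, and then conclude with Lemma \ref{lem:pivotdansH}. When such a splitting is not directly available, we instead produce each term $e^{i\ell\cdot\theta}\mathsf Q(e_j\cdot\theta)$ separately in $\mathcal H^\C_\infty$ and sum them using Corollary \ref{cor:trace-constant}. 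We organize the proof by $R_\ell:=\Card\{j,\ \ell_j=3\}$, the number of coordinates equal to $3$. The case $R_\ell=0$ is Lemma \ref{lem:2cube}.

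\textbf{Case $R_\ell\geq 1$, first step.} Suppose first that $\ell$ has a coordinate equal to $3$ and all other coordinates in $\{0,1,2\}$. Say $\ell_1=3$, and write $\ell=3e_1+r$ with $r\in\mathfrak C_2$, $r\geq0$, $r_1=0$. We treat each direction separately.
\begin{itemize}
\item \emph{Direction $e_1$.} As in \eqref{eq:obtain1}, the combination $\mathsf B(e^{i2e_1\cdot\theta},e^{i(\ell-2e_1)\cdot\theta})-2\mathsf B(e^{ie_1\cdot\theta},e^{i(\ell-e_1)\cdot\theta})$ is proportional to $\mathsf Q(e_1\cdot\theta)e^{i\ell\cdot\theta}$. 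Here $\ell-2e_1=e_1+r$ and $\ell-e_1=2e_1+r$ lie in $\mathfrak C_2$, hence in $\Lambda_{\mathcal K}$. Alternatively one can use $\mathsf P$ with pivots $p=2e_1+\lfloor r/2\rfloor$ and $q=e_1+\lceil r/2\rceil$.
\item \emph{Directions $j\notin\supp\ell$.} Take $p=e_1+\lfloor r/2\rfloor$ and $q=2e_1+\lceil r/2\rceil$. Then $p\pm e_j,q\pm e_j\in\mathfrak C_2$, so these are $e_j$--pivots, and \eqref{eq:polarization} yields $e^{i\ell\cdot\theta}\mathsf Q(e_j\cdot\theta)$.
\item \emph{Directions $j\in\supp r$.} Use the sign-averaging trick of \eqref{eq:obtainfrom2toJ}. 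Take $m_\varepsilon=\sum_{j\in\supp r}\varepsilon_je_j$, or a half-sized analogue, chosen so that $m_\varepsilon\in\mathfrak C_1$ and $\ell-m_\varepsilon\in\mathfrak C_2\cup 3\mathfrak C_1$. Averaging over $\varepsilon$ kills the cross terms $\mathsf B(e_j\cdot\theta,e_k\cdot\theta)$ and the terms $\mathsf B(\ell\cdot\theta,m_\varepsilon\cdot\theta)$.
\end{itemize}
The delicate point is that $\ell-m_\varepsilon$ still has first coordinate $3$. So we need the subcase where the coordinates of $r$ are $1$ or $2$ to be handled in a suitable order. I would first treat the subcase where the rest of $\ell$ lies in $\{0,1\}$ or in $\{0,3\}$ (controlled via Lemma \ref{lem:star-push}), and then induct on the number of entries equal to $2$.

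\textbf{General case.} Once all $\ell$ with a single $3$-coordinate are in $\Lambda_{\mathcal K}$, handle a general $\ell\in\mathfrak C_3$, $\ell\geq0$, by an induction on $R_\ell$. For the splitting, put $\lfloor\ell_j/2\rfloor$ into $p$ and $\lceil\ell_j/2\rceil$ into $\ell-p$. This gives $p,\ell-p\in\mathfrak C_2$ with entries in $\{0,1,2\}$. The perturbations $p\pm e_j$ and $\ell-p\pm e_j$ have at most one coordinate equal to $3$ and otherwise entries at most $2$, so they are covered by the previous step. If $p$ and $\ell-p$ both have several $2$'s, the perturbed vectors may be in $\mathfrak C_2$ or have exactly one $3$. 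Both are in $\Lambda_{\mathcal K}$, so $p$ and $\ell-p$ are $e_j$--pivots for all $j$, and Lemma \ref{lem:pivotdansH} gives $\ell\in\Lambda_{\mathcal K}$.

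\textbf{Main obstacle.} The main obstacle is the first step, specifically the directions $j\in\supp r$ with $r_j=2$. There one must find $m$ with both $m$ and $\ell-m$ already in $\Lambda_{\mathcal K}$, and with the symmetrized sum reproducing exactly $\mathsf Q(e_j\cdot\theta)$ without leftover cross terms. I would first try $m_\varepsilon=\sum_{j\in\supp r}\varepsilon_je_j$ with all signs. Then $\ell-m_\varepsilon$ has entries $\ell_j\mp1\in\{0,1,2,3\}$, so it has several $3$'s, and this must be avoided. If this fails, I would instead first establish the vectors $3e_1+r$ with $r\in\{0,1\}^n$ by these averaging arguments, using $3\mathfrak C_1$ and $\mathfrak C_2$. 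Then I would reach entries $2$ in $r$ via pivot splittings $p=2e_1+s$, $q=e_1+(r-s)$, with $s$ chosen so that $p\pm e_j$ and $q\pm e_j$ lie in $\mathfrak C_2$ or in this previously established class.
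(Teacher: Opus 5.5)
Your overall architecture matches the paper's: settle the vectors with exactly one coordinate of modulus $3$ first, then use the half-splitting $p=\lfloor \ell/2\rfloor$, $\ell-p=\lceil \ell/2\rceil$ for $R_\ell>1$. That last step is correct as you state it, and it needs no induction on $R_\ell$. Your $e_1$-direction identity $\mathsf B(e^{i2e_1\cdot\theta},e^{i(e_1+r)\cdot\theta})-2\mathsf B(e^{ie_1\cdot\theta},e^{i(2e_1+r)\cdot\theta})=2\mathsf Q(e_1\cdot\theta)e^{i\ell\cdot\theta}$ is also correct and uses only vectors of $\mathfrak C_2$. Discard the ``alternative'' pivot $p=2e_1+\lfloor r/2\rfloor$, though. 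Its perturbation $p+e_1=3e_1+\lfloor r/2\rfloor$ is itself an $R=1$ vector not yet known to lie in $\Lambda_{\mathcal K}$, unless $\lfloor r/2\rfloor=0$.

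The genuine gap is the directions $j\in\supp r$ in the case $R_\ell=1$. You leave this step open, and neither of your routes closes it.
\begin{itemize}
\item \emph{Sign averaging.} Any $m_\varepsilon$ supported in $\supp r$ leaves the first coordinate of $\ell-m_\varepsilon$ equal to $3$. So $\ell-m_\varepsilon\in\mathfrak C_2\cup 3\mathfrak C_1$ would force $r_j-\varepsilon_j\in\{0,3\}$ for both signs of $\varepsilon_j$, which is impossible when $r_j\in\{1,2\}$.
\item \emph{Fallback.} Its first sub-step is circular. For $\ell=3e_1+e_2$ and $\varepsilon_2=-1$ you need $3e_1+2e_2$, which belongs to the class you planned to treat \emph{after} $r\in\{0,1\}^n$.
\end{itemize}
Since your case $R_\ell>1$ rests on the case $R_\ell=1$, the lemma is not proved as written.

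The missing observation is that no averaging is needed. The pair $p=e_1+\lfloor r/2\rfloor$, $q=2e_1+\lceil r/2\rceil$ from your second bullet is already an $e_j$-pivot pair for every $j\in\{2,\dots,n\}$, including $j\in\supp r$. Indeed, $\lfloor r_j/2\rfloor,\lceil r_j/2\rceil\in\{0,1\}$. Hence $p\pm e_j$ and $q\pm e_j$ have first coordinate $1$ or $2$ and all other coordinates in $\{-1,0,1,2\}$, so they lie in $\mathfrak C_2$. Summing $\mathsf P(p,q;e_j)-2\mathsf B(e^{ip\cdot\theta},e^{iq\cdot\theta})=2\mathsf Q(e_j\cdot\theta)e^{i\ell\cdot\theta}$ over $j\geq 2$ and adding your $e_1$ identity gives $2c^*e^{i\ell\cdot\theta}\in\mathcal H^\C_\infty$ by Corollary \ref{cor:trace-constant}. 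This is exactly the paper's argument, and your ``main obstacle'' never arises. Your fallback's pivot splitting with $s=\lfloor r/2\rfloor$ would also work on its own for all $j\geq 2$, but you tied it to the broken first sub-step.
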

    \begin{proof}
         From the previous step, we have $\mathfrak C_2\subset\Lambda_{\mathcal K}$, and it follows from Lemma \ref{lem:star-push} that $3\mathfrak C_1\subset \Lambda_{\mathcal K}.$
For $\ell \in\mathfrak C_3$, we denote by $R_\ell:=\Card\{ |(\ell,e_j)|=3,\; j\in\{1,\dots,n\} \}.$ Let $\ell\in \mathfrak C_3$. We separate two cases.\\
\begin{itemize}
    \item Case $R_\ell=1$. In such a case, by the symmetry of the cubes, we assume without loss of generality, that $\ell=3e_1+\sum_{j=2}^n \ell_j e_j$, with $0\leq \ell_j\leq 2$ for $j\in\{2,\dots,n\}.$ We set
    \[
    p' = 2e_1,\quad \ell-p' = e_1+\sum_{j=2}^n \ell_j e_j.
    \]
    Note that $p'$ and $\ell-p'$ are $e_1$-\textit{pivot} vectors, in the sense of \eqref{eq:pivot-def}. Indeed,
    \begin{equation*}
    p'-e_1\in \mathfrak C_1\subset\Lambda_{\mathcal K},\quad p'+e_1\in 3\mathfrak C_1\subset\Lambda_{\mathcal K} ,\quad
       \ell-p'\pm e_1\in \mathfrak C_2\subset\Lambda_{\mathcal K}.
    \end{equation*}
    As above, by the polarization formula \eqref{eq:polarization},
    \[
  \mathcal H^\C_\infty\ni \mathsf P( p',\ell-p';e_1) -2\mathsf B(e^{ip'\cdot\theta},e^{i(\ell-p')\cdot\theta}) = 2 \mathsf Q(e_1\cdot\theta)e^{i\ell\cdot\theta}
    \]
    To generate other directions, we use
    \[
    p=\sum_{j=1}^{n}\lfloor \frac{\ell_j}{2} \rfloor e_j,\quad \ell-p =\sum_{j=1}^n\lceil \frac{\ell_j}{2} \rceil e_j.
    \]
    Again, $p$ and $\ell-p$ are $e_j$--\textit{pivot} vectors in the sense of \eqref{eq:pivot-def} for all $j\in\{2,\dots,n\}$. Indeed, as $0\leq \lfloor \tfrac{\ell_j}{2} \rfloor,\lceil \tfrac{\ell_j }{2}\rceil \leq 1$ for $j\in\{2,\dots,n\},$
    \[
    p\pm e_j\in \mathfrak C_2,\quad \ell-p\pm e_j\in \mathfrak C_2,\quad j\in\{2,\dots,n\}.
    \]
     Hence, again by the polarization formula \eqref{eq:polarization},
    \[
  \mathcal H^\C_\infty\ni \sum_{j=2}^n \mathsf P( p,\ell-p;e_j) -2(n-1)\mathsf B(e^{ip\cdot\theta},e^{i(\ell-p)\cdot\theta})= 2\sum_{j=2}^n \mathsf Q(e_j\cdot\theta)e^{i\ell\cdot\theta}.
    \]
    Summing up,
    \begin{multline*}
      \mathcal H^\C_\infty\ni -2(n-1)\mathsf B(e^{ip\cdot\theta},e^{i(\ell-p)\cdot\theta})
     - 2\mathsf B(e^{ip'\cdot\theta},e^{i(\ell-p')\cdot\theta})
      \\+\mathsf P( p',\ell-p';e_1)+\sum_{j=2}^n \mathsf P( p,\ell-p;e_j) 
      = 2e^{i\ell\cdot\theta}\sum_{j=1}^n\mathsf Q(e_j\cdot\theta)  = 2c^*e^{i\ell\cdot\theta}
    \end{multline*}
      from Corollary \ref{cor:trace-constant}. As the left hand side is in $\mathcal H^\C_\infty$ by Proposition \ref{prop:stability_scalar_product}, we deduce that $\ell\in \Lambda_{\mathcal K}$ by Lemma \ref{lem:initialization}. We have proven
      \begin{equation}\label{eq:R=1}
          \{ m\in\mathfrak C_3,\; R_m=1\}\subset \Lambda_{\mathcal K}.
      \end{equation}
      \item Case $R_\ell>1$. In such a case, we assume without loss of generality, that $\ell=\sum_{j=1}^n \ell_j e_j$, with $0\leq \ell_j\leq 3$. We set
    \[
    p =\sum_{j=1}^n\lfloor \frac{\ell_j}{2}\rfloor e_j,\quad \ell-p  =\sum_{j=1}^n\lceil \frac{\ell_j}{2}\rceil e_j.
    \]
    Note that $p$ and $\ell-p$ are $e_j$--\textit{pivot} vectors, in the sense of \eqref{eq:pivot-def}, for any $j\in\{1,\dots,n\}$. Indeed, $0\leq \lfloor \tfrac{\ell_j}{2}\rfloor, \lceil \tfrac{\ell_j}{2}\rceil\leq 2$ for $j\in\{1,\dots,n\},$ and thus
    \[
    p\pm e_j, \ell-p\pm e_j \in  \mathfrak C_2\cup \{ m\in\mathfrak C_3,\; R_m=1\}\subset \Lambda_{\mathcal K},
    \]
    from \eqref{eq:R=1}. Using Lemma \ref{lem:pivotdansH}, we obtain $\ell\in\Lambda_{\mathcal K}$. This ends this step.\\
\end{itemize}
    \end{proof}

\subsection{
Saturation for trigonometric polynomials: the
\texorpdfstring{$L$--cubes, $L\geq 4$}{L-cubes, L >= 4}}

\begin{Lemma}[Induction step]\label{lem:Induction_step}
  We have, for any $L\geq 3$,
    \[
    \mathfrak C_L \subset \Lambda_{\mathcal K}
    \]
\end{Lemma}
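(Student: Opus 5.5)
We argue by induction on $L\geq 3$. The base case $L=3$ is Lemma \ref{lem:3cube}. Assume $\mathfrak C_L\subset\Lambda_{\mathcal K}$ for some $L\geq 3$; we show $\mathfrak C_{L+1}\subset\Lambda_{\mathcal K}$. Lemma \ref{lem:star-push} first gives $(L+1)\mathfrak C_1\subset\Lambda_{\mathcal K}$. Fix $\ell\in\mathfrak C_{L+1}\setminus\mathfrak C_L$. By the invariance of cubes under permutations and sign changes of coordinates we may assume $\ell\geq 0$. Set $R_\ell:=\Card\{j,\;(\ell,e_j)=L+1\}\geq 1$. As in Lemmas \ref{lem:2cube} and \ref{lem:3cube}, we split into the cases $R_\ell=1$ and $R_\ell>1$.

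\emph{Case $R_\ell=1$.} Write $\ell=(L+1)e_1+\sum_{j\geq 2}\ell_je_j$ with $0\leq\ell_j\leq L$. For the direction $e_1$, take $p'=Le_1$ and $\ell-p'=e_1+\sum_{j\geq2}\ell_je_j$. Then $p'-e_1\in\mathfrak C_L$ and $p'+e_1\in(L+1)\mathfrak C_1$. Moreover $\ell-p'\pm e_1$ has first coordinate in $\{0,2\}\subset[0,L]$, so it lies in $\mathfrak C_L$. Hence $p'$ and $\ell-p'$ are $e_1$--pivots, and \eqref{eq:polarization} with Proposition \ref{prop:stability_scalar_product} yields $2\mathsf Q(e_1\cdot\theta)e^{i\ell\cdot\theta}\in\mathcal H^\C_\infty$.

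For the directions $e_j$, $j\geq2$, take the balanced splitting $p=\sum_j\lfloor\ell_j/2\rfloor e_j$ and $\ell-p=\sum_j\lceil\ell_j/2\rceil e_j$. Every coordinate of $p$ and $\ell-p$ is at most $\lceil (L+1)/2\rceil$. For $j\geq 2$, adding $\pm e_j$ changes only coordinate $j$, whose new value is at most $\lceil L/2\rceil+1\leq L$ since $L\geq 3$. Thus $p\pm e_j,\ \ell-p\pm e_j\in\mathfrak C_L$, so $p$ and $\ell-p$ are $e_j$--pivots for all $j\geq 2$. Summing $\mathsf P(p,\ell-p;e_j)-2\mathsf B(e^{ip\cdot\theta},e^{i(\ell-p)\cdot\theta})$ over $j\geq2$ gives $2e^{i\ell\cdot\theta}\sum_{j\geq2}\mathsf Q(e_j\cdot\theta)\in\mathcal H^\C_\infty$. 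Adding the two contributions and using Corollary \ref{cor:trace-constant}, we obtain $2c^*e^{i\ell\cdot\theta}\in\mathcal H^\C_\infty$. Lemma \ref{lem:initialization} then gives $\ell\in\Lambda_{\mathcal K}$, so $\{m\in\mathfrak C_{L+1},\;R_m=1\}\subset\Lambda_{\mathcal K}$.

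\emph{Case $R_\ell>1$.} Use the same balanced splitting $p,\ell-p$. Their coordinates are at most $\lceil (L+1)/2\rceil$, so adding $\pm e_j$ gives coordinates at most $\lceil (L+1)/2\rceil+1\leq L+1$, which holds for $L\geq 2$. Equality $L+1$ can occur in at most the single coordinate $j$ being modified. Hence $p\pm e_j$ and $\ell-p\pm e_j$ belong to $\mathfrak C_L\cup\{m\in\mathfrak C_{L+1},\;R_m=1\}\subset\Lambda_{\mathcal K}$, by the previous case. So $p$ and $\ell-p$ are $e_j$--pivots for every $j$, and Lemma \ref{lem:pivotdansH} gives $\ell=p+(\ell-p)\in\Lambda_{\mathcal K}$.

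The main obstacle is the coordinate bookkeeping in the case $R_\ell=1$. One has to check that the chosen splittings keep all shifted vectors inside the already-known set. This is exactly where $L\geq3$ is used: $\lceil L/2\rceil+1\leq L$ fails for $L=2$, which is why the cubes $\mathfrak C_2$ and $\mathfrak C_3$ were treated separately. A second point to verify is that the vector $\ell-p'$ in the $e_1$ direction never has a coordinate exceeding $L$; this holds because its first coordinate is $1$ and the others are at most $L$. Once these checks are done, the argument copies Lemma \ref{lem:3cube} verbatim. Finally, $\Lambda_{\mathcal K}=\Z^n$, and Theorem \ref{thm:Hcontainsallfrequencies} follows.
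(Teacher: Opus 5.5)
Your proof is correct, but it is organized differently from the paper's. You carry the two-case scheme of Lemma~\ref{lem:3cube} over to general $L$. For $R_\ell=1$ you handle the $e_1$ direction with the pivot $p'=Le_1$, which is where Lemma~\ref{lem:star-push} is needed to have $(L+1)e_1\in\Lambda_{\mathcal K}$, and the other directions with the floor/ceiling split. For $R_\ell>1$ you bootstrap from the first case.

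The paper instead takes the single diagonal pivot $p=\lfloor\tfrac{L+1}{2}\rfloor\sum_j e_j$ for every $\ell\geq0$ in $\mathfrak C_{L+1}$. It checks that $p\pm e_j$ and $\ell-p\pm e_j$ all lie in $\mathfrak C_L$ and concludes at once by Lemma~\ref{lem:pivotdansH}, with no case distinction and no use of Lemma~\ref{lem:star-push}.

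What each route buys:
\begin{itemize}
\item Your route buys generality. None of your checks needs more than $L\geq2$, so your argument is exactly the proof of Lemma~\ref{lem:3cube} with $3$ replaced by $L+1$, and the induction could start from Lemma~\ref{lem:2cube}.
\item The paper's route buys brevity, at the cost of requiring $\lceil\tfrac{L+1}{2}\rceil+1\leq L$, i.e.\ $L\geq3$.
\end{itemize}

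That same inequality shows your case split is redundant once $L\geq3$. Your balanced split already keeps all shifted coordinates in $[-1,\lceil\tfrac{L+1}{2}\rceil+1]\subset[-L,L]$, so your $R_\ell>1$ argument alone, with target $\mathfrak C_L$, covers every $\ell$.

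Finally, your closing explanation of where $L\geq3$ enters is inaccurate: $\lceil L/2\rceil+1\leq L$ does hold for $L=2$. What fails at $L=2$ is $\lceil\tfrac{L+1}{2}\rceil+1\leq L$, the paper's condition. Your own scheme only breaks at $L=1$, where $\ell-p'+e_1=\ell\notin\mathfrak C_1$.
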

\begin{proof}

Assume that $\mathfrak C_L\subset \Lambda_{\mathcal K}$ for some $L\geq 3$. We shall prove that $\mathfrak C_{L+1}\subset \Lambda_{\mathcal K}$. Let $\ell\in\mathfrak C_{L+1}.$ Without loss of generality, by symmetry of the cubes, assume that $\ell\geq 0$, that is all the coefficients of $\ell$ in the canonical basis $(e_j)_j$ are non-negative. Let 
   \[
   p = \lfloor \frac{L+1}{2}\rfloor\sum_{j=1}^n e_j
   .\]
   Note that $p$ and $\ell-p$ are $e_j$--\textit{pivot} vectors in the sense of \eqref{eq:pivot-def}, for any $j\in\{1,\dots,n\}$. Indeed, for any $k\in \{1,\dots,n\},$
   \[
   (p\pm e_j)_k = \begin{cases} \lfloor \tfrac{L+1}{2} \rfloor &\text{if }k\neq j\\ \lfloor \tfrac{L+1}{2} \rfloor\pm 1 &\text{if } k=j \end{cases}
   \]
   In any case,
   \[
     | (p\pm e_j)_k | \leq \lfloor \frac{L+1}{2} \rfloor+1 \leq L
   \]
   if $L\geq 2$. In the same way, for $L\geq 3$
    \[
     | (\ell-(p\pm e_j))_k | \leq L,\quad |(\ell-p)_k|\leq L,
   \]
   which proves the claim. From Lemma \ref{lem:pivotdansH}, $\ell\in\Lambda_{\mathcal K}$. This ends the proof, as initialization is proven in Lemma \ref{lem:3cube}.
\end{proof}

\begin{Remark}
    Inspecting the proof of Lemma \ref{lem:Induction_step} (that is the induction for $L\geq 4$), we observe that we were not able to find a general strategy to treat $L\geq 2$ in a unified way. For instance, consider the $2$--cube $\mathfrak C_2$, in dimension 2 (see Figure \ref{pic:2cube}). To reach $2e_1+e_2$ from $\mathfrak C_1$, we see that we need to construct first $2e_1$ and $2(e_1+e_2)$ (which are in $2\mathfrak C_1$) to obtain the three equations we need to combine, to separate the three unknowns
    \[
    \mathsf B(e_1\cdot\theta,e_2\cdot\theta),\quad \mathsf Q (e_1\cdot\theta),\quad \mathsf Q (e_2\cdot\theta).
    \]
    
\begin{figure}[ht]
\centering
\begin{tikzpicture}[scale=0.85,
  pt/.style={circle,fill=black!35,inner sep=0pt,minimum size=2.5pt},
  nb/.style={circle,draw=blue!70!black,fill=white,inner sep=0pt,minimum size=4pt}]

  \draw[->,black!50] (-2.6,0) -- (2.7,0) node[right,font=\small] {$e_1$};
  \draw[->,black!50] (0,-2.6) -- (0,2.7) node[above,font=\small] {$e_2$};
  \foreach \x in {-2,...,2} \foreach \y in {-2,...,2} \node[pt] at (\x,\y) {};
  \draw [-Stealth][Green,thick]  (0,0.02) -- (2,2.02);
   \draw [-Stealth][Red,thick]   (2,2) -- (2,1);
 \draw [-Stealth][Green,thick]  (0,-0.01) -- (1,0.99);
   \draw [-Stealth,Red,thick]   (1,1) -- (2,1);
   \draw [-Stealth][Green,thick]  (0,0) -- (2,0);
   \draw [-Stealth][Red,thick]   (2,0) -- (2,1);  
  \node[font=\small,anchor=west] at (2.15,1) {$\ell\in\mathfrak C_2$};
\end{tikzpicture}
\caption{We combine $\mathsf B( e^{i(2e_1+2e_2)\cdot\theta},e^{-ie_2\cdot\theta  }) $, $\mathsf B( e^{i(e_1+e_2)\cdot\theta},e^{ie_1\cdot\theta  }) $ and $\mathsf B( e^{2ie_1\cdot\theta},e^{ie_2\cdot\theta  }), $ to reach $\ell=(2,1)\in\mathfrak C_2$. Note that all the green and red vectors are either in $\mathfrak C_1$ or in $2\mathfrak C_1$.}
\label{pic:2cube}
\end{figure}
    Yet, it appeared that the argument is simpler when the cube $\mathfrak C_L$ is large, that is $L\geq 4$: we can always construct a proper \textit{pivot}--vector, with all combinations staying in the smaller cube $\mathfrak C_{L-1}$ (see for instance an illustration in dimension 2 for $L=5$ in Figure \ref{pic:5cube}). 
\end{Remark}

\begin{figure}[ht]
\centering
\begin{tikzpicture}[scale=0.85,
  pt/.style={circle,fill=black!35,inner sep=0pt,minimum size=2.5pt},
  nb/.style={circle,draw=blue!70!black,fill=white,inner sep=0pt,minimum size=4pt}]

  \draw[->,black!50] (-1.6,0) -- (5.8,0) node[right,font=\small] {$e_1$};
  \draw[->,black!50] (0,-1.6) -- (0,4.8) node[above,font=\small] {$e_2$};
  \foreach \x in {-1,...,5} \foreach \y in {-1,...,4} \node[pt] at (\x,\y) {};
  \node[font=\small,anchor=west] at (5.15,3) {$\ell\in\mathfrak C_5$};
  \draw  (2,2) node[Blue]{$\bullet$};
  \draw  [-,thick][Blue] (2,1)--(2,3);
   \draw  (3,1) node[Blue]{$\bullet$};
   \draw  [-,thick][Blue] (3,0)--(3,2);
    \draw [-Stealth][Green,thick]  (0,0) -- (2,3);
   \draw [-Stealth][Red,thick]   (2,3) -- (5,3);
   \draw [-Stealth][Green,thick]  (0,0) -- (2,1);
   \draw [-Stealth][Red,thick]   (2,1) -- (5,3);
      \draw [-Stealth][Green,thick]  (0,0) -- (3,0);
   \draw [-Stealth][Red,thick]   (3,0) -- (5,3);
    \draw [-Stealth][Green,thick]  (0,0) -- (3,2);
   \draw [-Stealth][Red,thick]   (3,2) -- (5,3);
    \draw [-Stealth][Green,thick]  (0,0) -- (2,2);
   \draw [-Stealth][Red,thick]   (2,2) -- (5,3);
       \draw [-Stealth][Green,thick]  (0,0) -- (3,1);
   \draw [-Stealth][Red,thick]   (3,1) -- (5,3);

\end{tikzpicture}
\caption{To reach $\ell=(5,3)\in\mathfrak C_5$, we use $p=(2,2)$ and $\ell-p=(3,1)$ as \textit{pivot}--vectors. Here, is an illustration of formula \eqref{eq:using_polarisation_formula}, in the $e_2$ direction, to isolate $\mathsf Q(e_2\cdot\theta)e^{i\ell\cdot\theta}$. Remark that all the green and red vectors are in the smaller cube $\mathfrak C_4$.}
\label{pic:5cube}
\end{figure}
 
\subsection{Proof of the main results}
\begin{proof}[Proof of Theorem \ref{thm:Hcontainsallfrequencies}]
From Lemmata \ref{lem:initialization}, \ref{lem:2cube}, \ref{lem:3cube} and \ref{lem:Induction_step}, we have $\Lambda_\mathcal K = \Z^n$. This is precisely the announced result.
    
\end{proof}

\begin{proof}[Proof of Theorem \ref{thm:main}] From Theorem \ref{thm:Hcontainsallfrequencies}, we observe that the real unital algebra $\mathcal A$ generated by $\{\cos(k\cdot\theta),\sin(k\cdot \theta),\; k \in \mathfrak C_1 \}$ satisfies
\[
\mathcal A \subset \Span\{\cos(\ell\cdot\theta),\sin(\ell\cdot\theta),\;\ell\in\Z^n\}\subset \mathsf H_\infty,
\] 
by trigonometric formulae. Moreover,
\begin{enumerate}
\item As $\mathcal A$ is unital, for any $m\in M$, there exists $f\in\mathcal A$ such that $f(m)\neq 0.$
    \item Let $m_1,m_2\in M$, $m_1\neq m_2$. Since $\Theta$ is injective, there exists $j$ such that $\theta_j(m_1)\neq\theta_j(m_2)$. Moreover, $\theta_j(m_1),\theta_j(m_2)\in[-1,1]\subset(-\pi,\pi)$. Hence either $\cos\theta_j(m_1)\neq\cos\theta_j(m_2)$ or $\sin\theta_j(m_1)\neq\sin\theta_j(m_2)$, and $\cos(e_j\cdot\theta),\sin(e_j\cdot\theta)\in\mathcal A$. Thus $\mathcal A$ separates points.
    \item Let $m\in M$ and $\xi\in T_mM\setminus\{0\}$. Since $\Theta^*\delta=g$, we have $|T_m\theta\xi|_{\R^n}=\|\Theta\|_{L^\infty(M)}^{-1}|\xi|_{g}\neq0$, hence there exists $j$ with $T_m\theta_j\xi\neq0$. Furthermore,
\[
T_m(\cos\circ\theta_j)\xi=-(\sin\circ\theta_j(m))T_m(\theta_j)\xi,\quad
T_m(\sin\circ\theta_j)\xi=\cos\circ\theta_j(m)T_m\theta_j\xi.
\]
As $\sin(\theta_j(m))$, $\cos(\theta_j(m))$ do not vanish simultaneously, we deduce that one of these two quantities is nonzero. Hence, $\mathcal A$ separates tangent vectors.
\end{enumerate}

The Nachbin \cite{Nachbin1949} theorem thus applies, and consequently $\mathcal A\subset\mathsf H_\infty \subset C^\infty(M)$ is dense in $C^\infty(M)$.

\end{proof}

We can finally sketch a proof of our controllability result.
\begin{proof}[Proof of Theorem \ref{th:bilinear_control}]
The proof comes from the techniques developed in \cite{BP25,CP23,DN24,KMP26} and \cite{Par26}. Specifically, \cite[Theorem 6]{CP23} tells us that the saturation property of Theorem~\ref{thm:Hcontainsallfrequencies} implies small-time approximate controllability of phases. Finally, the extra step towards global controllability is given  in \cite[Proposition 1.1]{Par26}, based on the recent result of gradient generation of \cite[Theorem A]{KMP26}.
\end{proof}

\bibliography{biblio_sat}
\bibliographystyle{plain}
\end{document}